    \DeclareMathOperator*{\supp}{supp}
    \DeclareMathOperator*{\dist}{dist}
    \DeclareMathOperator{\Tr}{Tr}
\newtheorem{thm}{Theorem}[section]
\newtheorem{lem}[thm]{Lemma}
\newtheorem{prop}[thm]{Proposition}
\theoremstyle{definition}
\theoremstyle{remark}
\newtheorem{rem}[thm]{Remark}
\numberwithin{equation}{section}
\title[Recurrence relations and vector equilibrium problems]{Recurrence relations and vector equilibrium problems arising from a model of
non-intersecting squared Bessel paths}
\author{A.B.J. Kuijlaars}
\address{Department of Mathematics, Katholieke Universiteit Leuven, Celestijnenlaan 200b - bus 2400,
3001 Leuven, Belgium.}
\email{arno.kuijlaars@wis.kuleuven.be}
\author{P. Rom\'an}
\address{Department of Mathematics, Katholieke Universiteit Leuven, Celestijnenlaan 200b - bus 2400,
3001 Leuven, Belgium.}
\address{CIEM, FaMAF, Universidad Nacional de C\'ordoba, Medina Allende
s/n Ciudad Universitaria, C\'ordoba, Argentina.}
\email{PabloManuel.Roman@wis.kuleuven.be}
\thanks{
The authors are supported by K.U. Leuven research grant OT/08/33.
The first author is also supported by  FWO-Flanders project
G.0427.09, by  the Belgian Interuniversity Attraction Pole P06/02, by the
European Science Foundation Program MISGAM, and by grant
MTM2008-06689-C02-01 of the Spanish
Ministry of Science and Innovation.}
\begin{document}

\begin{abstract}
In this paper we consider the model of $n$ non-intersecting squared
Bessel processes with parameter $\alpha$,
in the confluent case where all particles start, at time $t=0$,
at the same positive value $x=a$, remain positive, and end, at time $T=t$, at the position $x=0$.
The positions of the paths have a limiting mean density as $n\to\infty$ which is characterized
by a vector equilibrium problem. We show how to obtain this equilibrium problem
from different considerations involving the recurrence relations for multiple orthogonal polynomials
associated with the modified Bessel functions.

We also extend the situation by rescaling the parameter $\alpha$,
letting it increase proportionally to $n$ as $n$ increases. In this case we also analyze
the recurrence relation and obtain a vector equilibrium problem for it.
\end{abstract}

\maketitle

\section{Introduction and statement of results}

\subsection{Introduction}

This paper deals with the model, studied in \cite{KMW}, of $n$ non-intersecting squared
Bessel paths in the confluent limit where all paths start, at time $t=0$,
at the same positive value $x=a$, remain positive, and end at a later time  $t=T$, at the position $x=0$.

The squared Bessel process is a diffusion process on $[0,\infty)$, depending on
a parameter $\alpha > -1$, whose transition probability density is given by
\begin{align*}
    P_t^\alpha (x,y)&= \frac{1}{2t} \left(\frac{y}{x} \right)^{\frac{\alpha}{2}}
    e^{-\frac{x+y}{2t}}I_\alpha\left( \frac{\sqrt{xy}}{t} \right), \quad x,y>0, \\
    P_t^\alpha (0,y)&= \frac{y^\alpha}{(2t)^{\alpha+1}\Gamma(\alpha+1)}e^{-\frac{y}{2t}}, \quad y>0.
\end{align*}
Here, $I_\alpha$ denotes the modified Bessel function of the first kind of order $\alpha$,
\[ I_{\alpha}(z) = \sum_{k=0}^\infty \frac{(z/2)^{2k+\alpha}}{k!\Gamma(k+\alpha+1)}. \]

If we let $n\to \infty$, and perform an appropriate time scaling
$t \mapsto t/(2n)$, $T \mapsto 1/(2n)$, the paths fill out a
region in the $tx$-plane as shown in the left figure in Figure~\ref{figureNon}.
The figure shows a numerical simulation of $50$ non-intersecting
squared Bessel paths and the boundary of the region filled by the
paths. The right part of Figure~\ref{figureNon} shows a similar picture
where in addition $\alpha$ increases proportionally with $n$.
Since $\alpha$ is a measure for the repulsion from $0$ all paths
in the right part of Figure~\ref{figureNon} stay at a positive distance from $0$
until the final time $t=1$. In contrast, in the left part of
Figure~\ref{figureNon}, where $\alpha$ remains fixed, the
smallest paths arrive at the hard edge (the wall) at $x=0$ already
at a critical time $t = t^* = a/(1+a) < 1$, see \cite{KMW}.

\begin{figure}
\centering
\begin{overpic}[width=5.9cm,height=4.72cm]{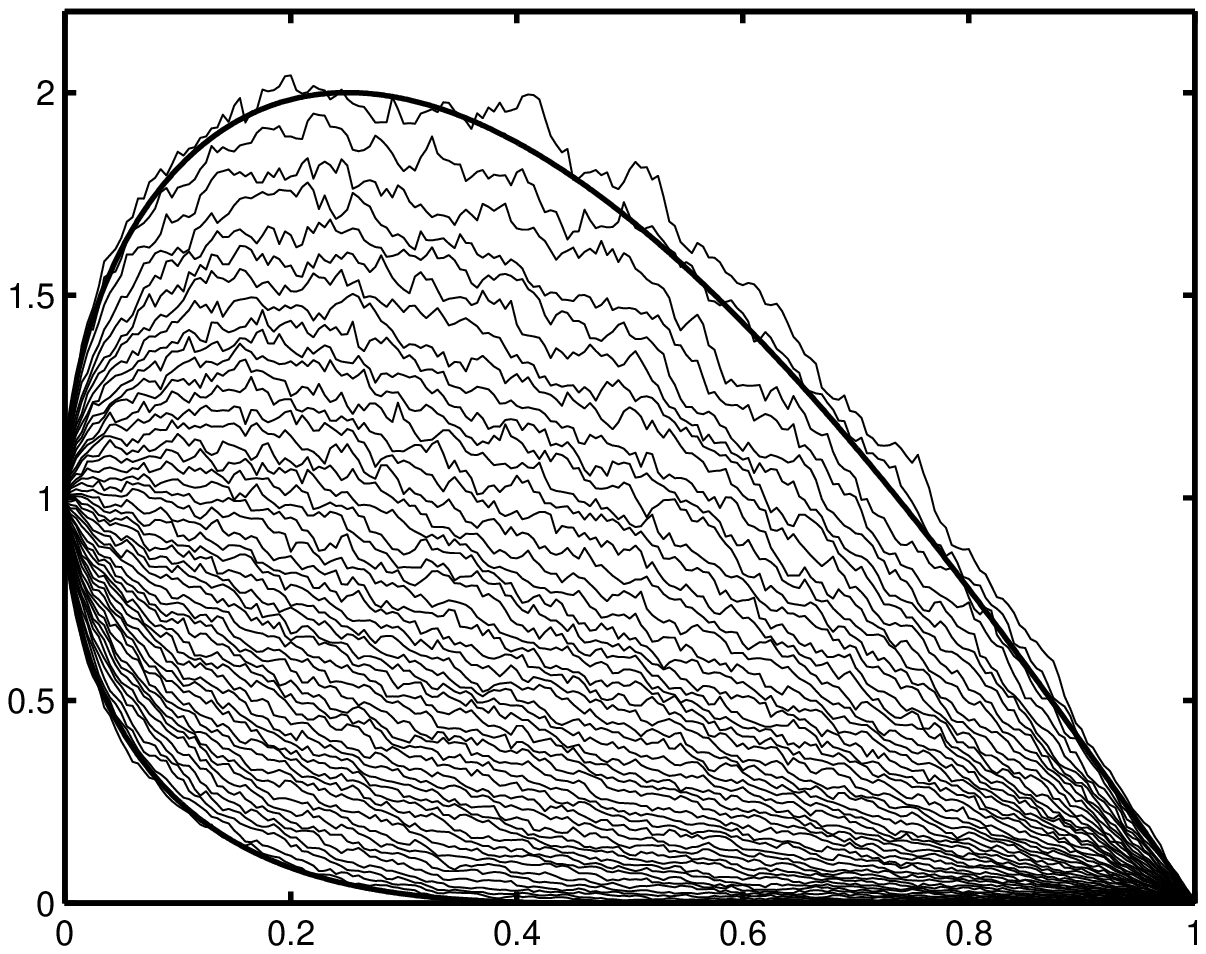}
\put(69,67.5){$a=1$}
\put(69,61.5){$\alpha$ fixed}
\put(69,2){$t$}
\put(5,40){$x$}
\end{overpic}
\hspace{.5cm}
\begin{overpic}[width=5.9cm,height=4.72cm]{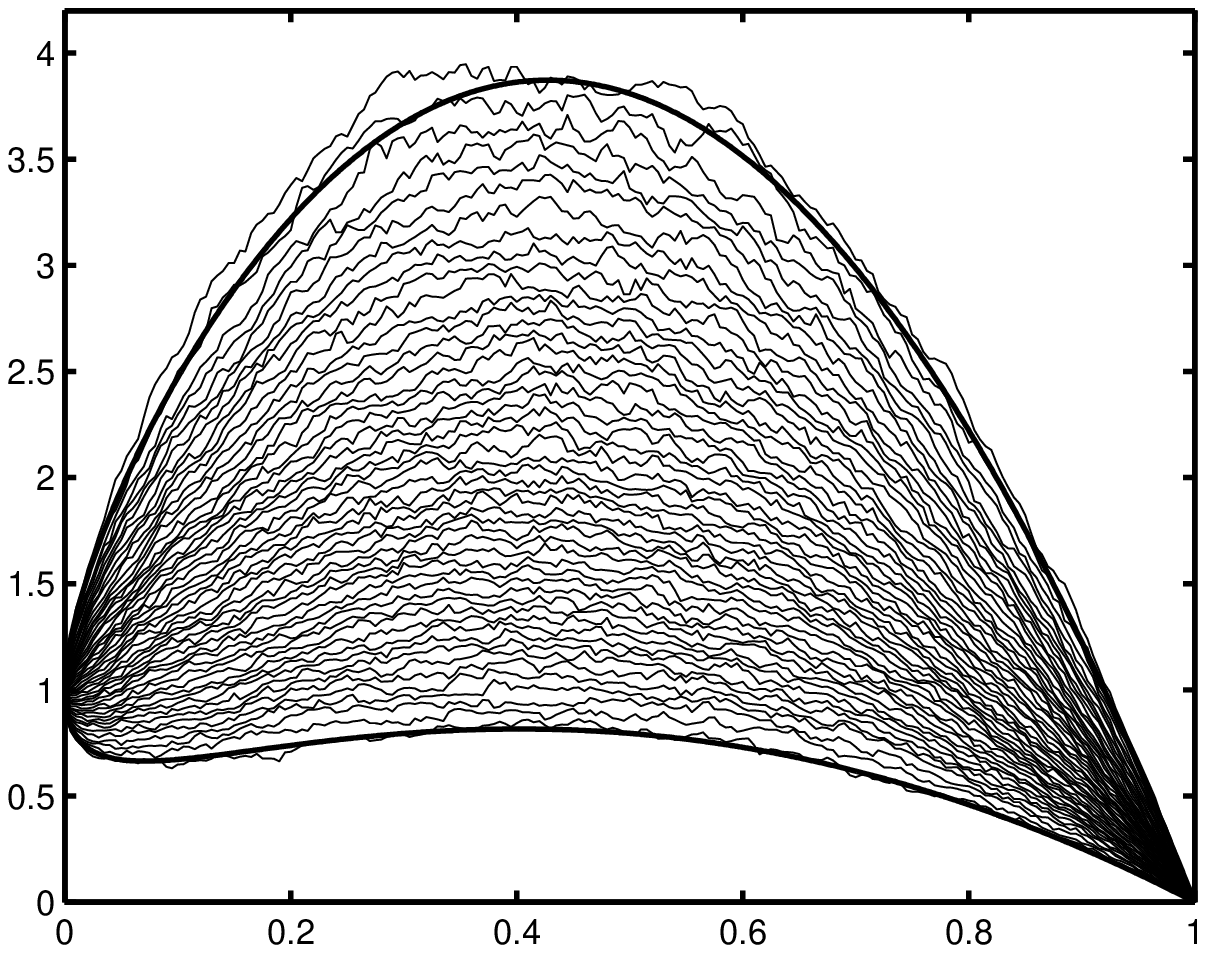}
\put(69,67.5){$a=1$}
\put(69,61.5){$\alpha = 5n$}
\put(50,2){$t$}
\put(5,40){$x$}
\end{overpic}
\caption{ \label{figureNon} Numerical simulation of $n=50$
non-intersecting squared Bessel paths. The left figure shows $n$
paths with fixed $\alpha$, and the right figure shows $n$ paths
with $\alpha$ increasing proportionally with $n$.}
\end{figure}

The model with fixed $\alpha$ was studied in \cite{KMW}. One of the
results was that for every $t \in (0,1)$, the positions of
the paths at time $t$ have a limiting mean density as $n \to \infty$,
that is characterized by the following vector equilibrium problem
(see Theorem 2.4 and the appendix of \cite{KMW}).
The vector equilibrium problem asks to minimize the functional
\begin{multline} \label{eq:energyfunctional}
    \iint \log \frac{1}{|x-y|} d\nu_1(x)d\nu_1(y) + \iint \log \frac{1}{|x-y|}d\nu_2(x)d\nu_2(y)\\
    -  \iint \log \frac{1}{|x-y|} d\nu_1(x)d\nu_2(y) +
        \int \left(\frac{x}{t(1-t)}-\frac{2\sqrt{ax}}{t} \right) d\nu_1(x),
\end{multline}
over all vectors of measures $(\nu_1,\nu_2)$ such that
\begin{align} \label{eq:vectornormalization}
    \supp(\nu_1)\subset [0,\infty), \quad \int d\nu_1 = 1, \qquad
    \supp(\nu_2)\subset (-\infty,0], \quad \int d\nu_2 = 1/2,
    \end{align}
 and
\begin{align} \label{eq:constraint}
    \nu_2 \leq \sigma,
\end{align}
where $\sigma$ is the measure on $(-\infty, 0]$ with density
\begin{align} \label{eq:constraintdensity}
    \frac{d\sigma(x)}{dx} = \frac{\sqrt{a}}{\pi t}|x|^{-1/2}, \qquad x\in (-\infty,0].
    \end{align}
There is a unique minimizer $(\nu_1,\nu_2)$, and the part $\nu_1$ has a density
which is the limiting mean density of the positions of the paths at time $t$.
The equilibrium problem depends on the parameters $a > 0$ and $0 < t < 1$
that appear in the energy functional \eqref{eq:energyfunctional}
as well as in the upper constraint \eqref{eq:constraintdensity}.

The above equilibrium problem is somewhat unusual since it
involves a second measure $\nu_2$ with a constraint $\sigma$. It
turns out (see \cite{KMW}) that the constraint is not active if $t
< t^* = a/(a+1)$, that is before the critical time where the
smallest paths come to the hard edge. The constraint is active for
$t > t^*$.

It is the aim of this paper to give more insight into the nature
of the vector equilibrium problem with constraint. We show how it
arises from different considerations involving the recurrence
relations for multiple orthogonal polynomials associated with
modified Bessel functions, see the next section.

We note that a similar vector equilibrium problem with constraint
and external field was also found in the context of the Hermitian
two-matrix model
 \[ \frac{1}{Z_n} \exp \left(-n \Tr (V(M_1) + W(M_2) - \tau M_1 M_2) \right) dM_1 dM_2 \]
where $W(M) = \frac{1}{4} M^4$, see \cite{DK2}. The vector
equilibrium problem now involves three measures, which are located
on $\mathbb R$, $i \mathbb R$ and $\mathbb R$, respectively, with
a constraint acting on the second measure. The energy functional
is similar to  \eqref{eq:energyfunctional}. The polynomials that
are connected to this model satisfy a five term recurrence
relation.  The ideas that are developed in this paper can be
extended to analyze this recurrence relation and it is possible to
obtain the vector equilibrium problem from it. This will be
reported elsewhere.

\subsection{Multiple orthogonal polynomials}

In \cite{KMW} it was shown that the positions of the paths at any time $t \in (0,T)$ constitute a
multiple orthogonal polynomial (MOP) ensemble associated with the two weight functions
\begin{equation} \label{Besselweights}
    w_{1}(x) = x^{\frac{\alpha}{2}} e^{-\frac{Tx}{2t(1-t)}} I_\alpha
    \left(\frac{\sqrt{ax}}{t}\right),\qquad
    w_{2}(x) =  x^{\frac{\alpha+1}{2}} e^{-\frac{Tx}{2t(1-t)}}I_{\alpha+1} \left(\frac{\sqrt{ax}}{t}\right),
\end{equation}
defined for $x \in (0,+\infty)$. It implies in particular that the
`average characteristic polynomial'
\begin{equation} \label{MOPpolynomial}
    B_n(x) = \mathbb E \left[ \prod_{j=1}^n (x-x_j(t)) \right],
    \end{equation}
where $x_1(t) < x_2(t) < \cdots < x_n(t)$ are the positions of the $n$ paths at time $t$,
satisfies the orthogonality relations (assume $n$ is even)
\begin{equation} \label{MOPorthogonality}
    \int_0^{\infty} B_n(x) x^k w_j(x) dx = 0, \qquad
    \text{for } k=0, 1, \ldots, n/2-1, \quad j=1,2.
    \end{equation}
The polynomial \eqref{MOPpolynomial} is characterized by a $3
\times 3$ matrix valued Riemann-Hilbert problem and the
correlation kernel for the MOP ensemble can be written directly in
terms of the solution of the Riemann-Hilbert problem. See also
\cite{Kuijlaars} for the general notion of a MOP ensemble.

The multiple orthogonal polynomials $B_n$ for the weights \eqref{Besselweights}
were studied in detail  by Coussement and Van Assche \cite{CVA1}, \cite{CVA2}.
They obtained a third order differential equation, which was used in \cite{KMW}.
In addition they also gave an explicit four term recurrence relation
\begin{equation} \label{eq:polynomialsBk}
    x B_k(x) = B_{k+1}(x) + b_k B_k(x) + c_k B_{k-1}(x) + d_k B_{k-2}(x)
    \end{equation}
with recurrence coefficients
\begin{equation} \label{eq:coefficientsbcdk}
\begin{aligned}
    b_k & = \frac{a(T-t)^2}{T^2} + \frac{2t(T-t)}{T}(2k+ \alpha + 1), \\
    c_k & = \frac{4at(T-t)^3}{T^3}k + \frac{4t^2(T-t)^2}{T^2}k(k+\alpha), \\
    d_k & = \frac{4at^2(T-t)^4}{T^4} k(k-1).
    \end{aligned}
    \end{equation}
After the rescaling $t \mapsto t/(2n)$,
$T \mapsto 1/(2n)$, we obtain a doubly indexed sequence $B_{k,n}$ of polynomials
satisfying for each $n$, the recurrence
\begin{equation} \label{eq:polynomialsBkn}
    x B_{k,n}(x) = B_{k+1,n}(x) + b_{k,n} B_{k,n}(x) + c_{k,n} B_{k-1,n}(x) + d_{k,n} B_{k-2,n}(x)
    \end{equation}
with recurrence coefficients
\begin{equation} \label{eq:coefficientsbcdkn}
\begin{aligned}
    b_{k,n} & = a(1-t)^2 + t(1-t) \frac{2k+ \alpha + 1}{n}, \\
    c_{k,n} & = 2at(1-t)^3 \frac{k}{n} + t^2(1-t)^2 \frac{k(k+\alpha)}{n^2}, \\
    d_{k,n} & = at^2(1-t)^4 \frac{k(k-1)}{n^2}.
    \end{aligned}
    \end{equation}
It is the aim of this paper to obtain the vector equilibrium problem from
this recurrence relation. We will show that the measure $\nu_1$ is the limiting zero
distribution of the diagonal polynomial $B_{n,n}$ as $n \to \infty$.

We also consider the following extension of the situation studied in \cite{KMW}.
We rescale the parameter $\alpha$ by letting it increase proportionally to $n$ as $n$ increases.
The right figure in Figure~\ref{figureNon} shows
a numerical simulation of 50 non-intersecting paths for this case. We observe that now the
paths stay away from the hard edge $x=0$. In this case we are also able to analyze the
recurrence relation and obtain a vector equilibrium problem from it.

\subsection{Polynomials satisfying an $m$-term recurrence relation}

The recurrence coefficients \eqref{eq:coefficientsbcdkn} are such
that whenever $n \to \infty$, $k \to \infty$ so that $k/n \to s$
exists, there is a limit
\[ b_{k,n} \to b(s), \qquad c_{k,n} \to c(s), \qquad d_{k,n} \to d(s) \]
for certain functions $b(s)$, $c(s)$ and $d(s)$.

We will study this situation in the general context of polynomials $P_{k,n}$
depending on two parameters, satisfying for each $n$, an $m$-term
recurrence relation with varying coefficients
\begin{equation} \label{recurrence_m}
    xP_{k,n}(x) = P_{k+1,n}(x)+b_{k,n}^{(0)} P_{k,n}(x) + b_{k,n}^{(1)}P_{k-1,n}(x)+
        \cdots+b_{k,n}^{(m-2)}P_{k+2-m,n}(x),
\end{equation}
with $P_0\equiv 1$, $P_{-1}\equiv 0,\ldots,P_{-m+2}\equiv 0$, where the
recurrence coefficients have scaling limits
\[ \lim_{k/n\to s} b_{k,n}^{(j)} = b^{(j)}(s), \qquad j=0, \ldots, m-2. \]
for certain functions $b^{(0)}, \ldots, b^{(m-2)}$. The notation $\lim_{k/n \to s}$
that we use here and also later in the paper, means that both $k, n \to \infty$ with $k/n \to s$.

We associate with the functions $b^{(j)}$, $j=0,\ldots, m-2$, a family of functions
\begin{equation} \label{sympbolsAs}
    A_s(z) = z + b^{(0)}(s) + b^{(1)}(s)z^{-1}+ \cdots +b^{(m-2)}(s) z^{-m+2}
\end{equation}
and the sequence of Toeplitz matrices $(T_n)_n$ where $T_n = T_n(A_s)$
is the $n \times n$ Toeplitz matrix with symbol $A_s$, defined by
\begin{equation}
\label{definition_toeplitz_matrix}
    (T_n(A_s))_{jk}=\begin{cases} 1,          & \text{if }k=j+1,\\
                              b^{(i)}(s), & \text{if }k=j-i, \qquad i=0, \ldots, m-1,\\
                              0,          & \text{otherwise}. \end{cases}
\end{equation}

The limiting behavior of the spectrum of $T_n(A_s)$  as $n\to
\infty$ is characterized in terms of the solutions of the
algebraic equation $A_s(z)=x$, see \cite{BG}. For every
$x\in\mathbb{C}$ there exist exactly $m-1$ solutions of the
equation $A_s(z)=x $ (assume that $b^{(m-2)}(s) \neq 0$) which we
denote by $z_j(x,s)$, $j=1,\ldots,m-1$. We label the solutions by
their absolute value so that
\begin{equation}
\label{absolute_value_solutions}
    |z_{1}(x,s)| \geq |z_2(x,s)| \geq \cdots \geq |z_{m-1}(x,s)| > 0.
\end{equation}
We put
\begin{equation} \label{Gamma1s}
    \Gamma_1(s) = \{ x\in \mathbb{C} \mid |z_1(x,s)|=|z_2(x,s)| \},
    \end{equation}
which is a finite union of analytic arcs.

We will use the following classical theorem on the behavior of the eigenvalues
of $T_n(A_s)$ as $n \to \infty$.
\begin{thm} \label{Toeplitztheorem}
As $n \to \infty$ the eigenvalues of $T_n(A_s)$ accumulate on the contour
\eqref{Gamma1s}.

The sequence of normalized counting measures of the eigenvalues of $T_n(A_s)$
converges weakly as $n\to \infty$ to the Borel probability measure $\mu^s_1$
on $\Gamma_1(s)$, given by
\begin{equation} \label{measure_mu_for_recurrence}
    d\mu^s_1(x) = \frac{1}{2\pi i}
    \left( \frac{z'_{1-}(x,s)}{z_{1-}(x,s)}-\frac{z'_{1+}(x,s)}{z_{1+}(x,s)} \right)dx,
\end{equation}
where $'$ denotes the derivative with respect to $x$. In \eqref{measure_mu_for_recurrence},
we have that $dx$ is the complex line element on $\Gamma_1(s)$ and
$z_{1\pm}(x,s)$ is the limiting value of $z_1(\tilde{x},s)$ as $\tilde{x}\to x$
from the $\pm$ side on each of the arcs in $\Gamma_1(s)$ (the complex line element induces an orientation
on $\Gamma_1(s)$ and the $+$ side ($-$ side) is on the left (right) as one traverses
$\Gamma_1(s)$ according to the orientaton).
\end{thm}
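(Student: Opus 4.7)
The plan is to follow the approach of Schmidt–Spitzer and Hirschman, viewing the eigenvalue problem for $T_n(A_s)$ as a boundary value problem for a linear recurrence of order $m-1$. An eigenvector $v=(v_1,\ldots,v_n)^T$ with eigenvalue $x$, extended by $v_0=v_{-1}=\cdots=v_{-m+3}=0$ and $v_{n+1}=0$, satisfies
\[
    v_{j+1}+b^{(0)}(s)v_j+b^{(1)}(s)v_{j-1}+\cdots+b^{(m-2)}(s)v_{j-m+2}=xv_j,\qquad j=1,\ldots,n.
\]
Trying $v_j=z^j$ and clearing $z^{j-m+2}$ yields exactly $A_s(z)=x$, whose $m-1$ roots are $z_1(x,s),\ldots,z_{m-1}(x,s)$; these are nonzero because $b^{(m-2)}(s)\neq 0$.

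Next I would write the general solution $v_j=\sum_{l=1}^{m-1}c_l\,z_l(x,s)^j$ (treating coincident roots by a limiting argument), and impose the $m-1$ boundary conditions. Nontrivial solutions exist if and only if a determinant $D_n(x)$ of an $(m-1)\times(m-1)$ matrix vanishes; by cofactor expansion along the row encoding $v_{n+1}=0$,
\[
    D_n(x)=\sum_{l=1}^{m-1}(-1)^{l-1}z_l(x,s)^{n+1}M_l(x,s),
\]
where $M_l$ is a bounded Vandermonde-like minor in the remaining $z_k$. The standard Schmidt–Spitzer dominant-balance argument is that, off a neighborhood of $\Gamma_1(s)$, the single term $z_1^{n+1}M_1$ strictly dominates the others as $n\to\infty$, so $D_n$ cannot vanish there; conversely on $\Gamma_1(s)$ the two leading terms have equal moduli and can cancel, producing roots that fill out the arc densely. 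This gives the first assertion of the theorem.

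For the weak convergence of the counting measures I would compute the Cauchy transform: from the product formula $D_n(x)=\prod_j(\lambda_j^{(n)}-x)\cdot(\text{factor bounded in }n)$ combined with the expansion above, one finds
\[
    \frac{1}{n}\frac{D_n'(x)}{D_n(x)}\;\longrightarrow\;\frac{z_1'(x,s)}{z_1(x,s)},\qquad x\in\mathbb{C}\setminus\Gamma_1(s),
\]
so that $z_1'/z_1$ is the Cauchy transform of the limiting measure $\mu_1^s$. The Plemelj–Sokhotski jump formula then identifies the density in \eqref{measure_mu_for_recurrence}, using that on $\Gamma_1(s)$ the boundary values $z_{1\pm}(x,s)$ are the two competing roots. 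The main obstacle is making the dominant-balance step uniform and handling the branch points of $\Gamma_1(s)$ where three or more roots collide, together with the verification that the off-contour convergence of the Cauchy transform really forces weak convergence of the counting measures (not merely accumulation on $\Gamma_1(s)$); since the theorem is classical I would ultimately invoke \cite{BG} rather than reproving these technicalities.
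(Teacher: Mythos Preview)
Your proposal is correct in spirit and ultimately aligns with the paper's own treatment: the paper does not prove this theorem from scratch but simply cites the classical results --- Schmidt and Spitzer \cite{SS} for the accumulation set, Hirschman \cite{Hirschman} and \cite[Chapter~11]{BG} for the weak convergence of the counting measures, and \cite{DK1} for the explicit density \eqref{measure_mu_for_recurrence}. Your sketch of the recurrence/dominant-balance mechanism and the Cauchy-transform identification is a faithful outline of what lies behind those references, and your closing remark that you would ``ultimately invoke \cite{BG}'' is exactly what the paper does; so there is no substantive difference in approach, only in the amount of exposition you chose to include.
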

\begin{proof}
The fact that the eigenvalues accumulate on $\Gamma_1(s)$
was shown by Schmidt and Spitzer \cite{SS}. The result about
the limit of the normalized eigenvalue counting measures is due
to Hirschman \cite{Hirschman}, see also \cite[Chapter~11]{BG}.
The precise form \eqref{measure_mu_for_recurrence} for the limiting
measure was given in \cite{DK1}.
\end{proof}
For later use we also note that by \cite[Proposition~4.2]{DK1},
\begin{equation} \label{mu_1_identity_1}
    \int \frac{d \mu^s_1(y)}{x-y} = \frac{z'_1(x,s)}{z_1(x,s)},
        \qquad \text{for } x \in \mathbb{C}\setminus \Gamma_1(s),
\end{equation}
which also characterizes the measure $\mu^s_1$.

Our first result states that, under certain conditions, the
polynomials $P_{k,n}$ satisfying the recurrence \eqref{recurrence_m}
have a limiting zero distribution, which is an average of
the measures \eqref{measure_mu_for_recurrence}. The average
is with respect to the parameter $s$.

Theorem~\ref{Theorem_zero_distribution_general} is an extension
of Theorem~3.1 of \cite{CCVA} to polynomials
satisfying an $m$-term recurrence relation instead of a specific four-term recurrence
relation as in \cite{CCVA}. The analogous result for orthogonal polynomials
satisfying a three-term recurrence is from \cite{KVA}. See also
\cite{KS} and \cite{ShapT} for related results.

\begin{thm} \label{Theorem_zero_distribution_general}
Let for each $n\in \mathbb{N}$, $m-1$ sequences $\{b_{k,n}^{(j)}\}_{k=0}^\infty$,
$j=0,\ldots,m-2$, of real coefficients be given and assume that there exist
continuous functions $b^{(j)}:[0,\infty)\to \mathbb{R}$, $j=0,\ldots,m-2$,
such that for each $s\geq 0$,
\begin{equation}
\label{Convergence_coefficients_m}
\lim_{k/n\to s} b^{(j)}_{k,n}=b^{(j)}(s),\quad j=0,\ldots,m-2.
\end{equation}
Let $P_{k,n}$ be the monic polynomials generated by the recurrence \eqref{recurrence_m}
and suppose that
\begin{itemize}
\item[\rm (a)]  the polynomials $P_{k,n}$ have real and simple zeros $x_1^{k,n} < \cdots < x_k^{k,n}$
satisfying for each $k$ and $n$ the interlacing property
\[ x_j^{k+1,n}< x_j^{k,n}<x_{j+1}^{k+1,n}, \qquad \text{for } j=1,\ldots,k, \]
\item[\rm (b)] $\Gamma_1(s) \subset \mathbb R$ for every $s > 0$, where $\Gamma_1(s)$
is given by \eqref{Gamma1s}.
\end{itemize}
Then the normalized zero counting measures $\nu(P_{k,n}) = \frac{1}{k} \sum_{j=1}^k \delta_{x_j^{k,n}}$
have a weak limit as $k, n \to \infty$ with $k/n \to \xi > 0$ given by
\begin{equation} \label{limitingmeasure}
    \lim_{k/n\to \xi} \nu(P_{k,n})= \frac{1}{\xi}\int_0^\xi \mu^{s}_1 \, ds
    \end{equation}
where $\mu_1^s$ is the measure \eqref{measure_mu_for_recurrence}.
\end{thm}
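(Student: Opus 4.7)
The plan is to use the moment method. The zeros of $P_{k,n}$ coincide with the eigenvalues of the $k\times k$ upper Hessenberg matrix $H_{k,n}$ whose nonzero entries are $(H_{k,n})_{j,j+1}=1$ and $(H_{k,n})_{j,j-i}=b_{j,n}^{(i)}$ for $i=0,\ldots,m-2$. By assumption~(a) these eigenvalues are real and simple, and hence
\[ \int x^\ell\,d\nu(P_{k,n})(x)=\frac{1}{k}\Tr(H_{k,n}^\ell), \qquad \ell\geq 0. \]
Continuity of the $b^{(j)}$ together with \eqref{Convergence_coefficients_m} yields a uniform bound on the entries of $H_{k,n}$ for $k/n$ in a compact set; the banded structure then bounds $\|H_{k,n}\|$, hence all eigenvalues, and we obtain tightness of $\{\nu(P_{k,n})\}_{k/n\to\xi}$. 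It therefore suffices to show that each moment converges to the corresponding moment of $\tfrac{1}{\xi}\int_0^\xi \mu_1^s\,ds$.

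The central device is a freezing-of-coefficients argument. Since $H_{k,n}$ has bandwidth $m-1$, every closed walk of length $\ell$ starting at index $i$ stays inside $[i-\ell(m-1),\,i+\ell(m-1)]$, so $(H_{k,n}^\ell)_{ii}$ depends only on the coefficients $b_{j,n}^{(r)}$ with $|j-i|\leq\ell(m-1)$. Taking $(i,k,n)\to\infty$ with $k/n\to\xi$ and $i/n\to s\in(0,\xi)$, $i$ in the bulk, these local coefficients tend to $b^{(r)}(s)$, so
\[ (H_{k,n}^\ell)_{ii}\;\longrightarrow\;(T_\infty(A_s)^\ell)_{00}, \]
where $T_\infty(A_s)$ is the infinite upper Hessenberg matrix with constant entries $b^{(r)}(s)$ and superdiagonal $1$. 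The right-hand side is independent of the base index by translation invariance. Applying the same freezing argument to the finite matrix $T_n(A_s)$ yields $\tfrac{1}{n}\Tr(T_n(A_s)^\ell)\to (T_\infty(A_s)^\ell)_{00}$ as $n\to\infty$; combining with Theorem~\ref{Toeplitztheorem} (and using assumption~(b), which places $\mu_1^s$ on $\mathbb{R}$) identifies
\[ (T_\infty(A_s)^\ell)_{00}=\int x^\ell\,d\mu_1^s(x)=:M_\ell(s). \]

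The map $s\mapsto M_\ell(s)$ is continuous, being a fixed polynomial in the continuously varying $b^{(r)}(s)$. Splitting the trace into a bulk part and a boundary part with $|i-1|$ or $|i-k|\leq\ell(m-1)$ (which contributes $O(\ell/k)\to 0$), a bounded-convergence argument gives $\tfrac{1}{k}\sum_i |(H_{k,n}^\ell)_{ii}-M_\ell(i/n)|\to 0$, while the Riemann sum $\tfrac{1}{k}\sum_{i=1}^k M_\ell(i/n)$ converges to $\tfrac{1}{\xi}\int_0^\xi M_\ell(s)\,ds$. This is exactly the $\ell$-th moment of $\tfrac{1}{\xi}\int_0^\xi\mu_1^s\,ds$, and combined with tightness it establishes the weak convergence \eqref{limitingmeasure}. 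The principal technical obstacle I anticipate is promoting the pointwise hypothesis \eqref{Convergence_coefficients_m} to enough local uniform convergence in the $i/n$ variable to control the error $(H_{k,n}^\ell)_{ii}-M_\ell(i/n)$ on average; this follows by a standard diagonal/subsequence argument that uses continuity of the $b^{(r)}$.
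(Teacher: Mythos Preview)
Your argument is correct and takes a genuinely different route from the paper's proof. The paper proceeds via \emph{ratio asymptotics}: it first shows (Lemma~\ref{theorem_before_zeros_polynomials}) that
\[
\lim_{k/n\to s}\frac{P_{k+1,n}(x)}{P_{k,n}(x)}=z_1(x,s)
\]
uniformly on compacts of $\mathbb{C}\setminus[-R,R]$, using the interlacing hypothesis~(a) in an essential way to force the ratios into a normal family (via the bounds of Lemma~\ref{lemma_polynomials_interlacing}), and then identifies any subsequential limit with $z_1(x,s)$ by an induction on its asymptotic expansion at $\infty$. The theorem follows by writing $\tfrac{1}{k}P_{k,n}'/P_{k,n}$ as a telescoping integral of logarithmic derivatives of these ratios and passing to the limit under the integral by dominated convergence, which yields the Cauchy transform of $\tfrac{1}{\xi}\int_0^\xi\mu_1^s\,ds$.

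Your moment/trace approach replaces all of the complex-analytic machinery by a combinatorial ``freezing'' of the banded Hessenberg matrix and an appeal to Theorem~\ref{Toeplitztheorem}. Two remarks on the comparison. First, your method uses hypothesis~(a) only to place the zero counting measures on $\mathbb{R}$; the interlacing itself plays no role (and even the reality of the zeros is inessential, since moment convergence together with the uniform spectral bound already forces weak convergence on $\mathbb{C}$ to a limit which, by~(b), lives on $\mathbb{R}$). Second, the paper's route yields the stronger intermediate statement of ratio asymptotics (Lemma~\ref{theorem_before_zeros_polynomials}), which is of independent interest and feeds naturally into the potential-theoretic identity \eqref{mu_1_identity_1}; your approach trades this for a shorter, more elementary argument. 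The technical point you flag---that the pointwise hypothesis \eqref{Convergence_coefficients_m} must be upgraded to control the average error $\tfrac{1}{k}\sum_i|(H_{k,n}^\ell)_{ii}-M_\ell(i/n)|$---is exactly parallel to the paper's use of dominated convergence in \eqref{riemman_sum}--\eqref{eq:log_derivative}: writing the sum as $\int_0^1 g_{k,n}(s)\,ds$ with $g_{k,n}$ uniformly bounded, one has $g_{k,n}(s)\to 0$ for each fixed $s$ along any sequence $k/n\to\xi$ (since then $\lceil sk\rceil/n\to s\xi$ and the finitely many coefficients in the walk expansion converge by \eqref{Convergence_coefficients_m}), so bounded convergence applies. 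Note that both arguments implicitly use the uniform bound on $\{b^{(j)}_{k,n}:k\le(s+1)n\}$ that the paper asserts just after \eqref{definitionR}; your tightness step needs it for the same reason.
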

The proof of Theorem~\ref{Theorem_zero_distribution_general} is
given in Section~\ref{Section_zero_distribution}.

\subsection{Multiple orthogonal polynomials associated with modified Bessel functions}

We want to apply Theorem~\ref{Theorem_zero_distribution_general} to
the multiple orthogonal polynomials $B_k$ associated with the modified
Bessel function.

\subsubsection{Interlacing} The assumption (a) of Theorem~\ref{Theorem_zero_distribution_general}
will be satisfied since we have the following.

\begin{prop}
\label{Interlacing_Bk} Let $a > 0$, $\alpha > -1$, $0 < t < T$.
Then the polynomials $B_k$ generated by the recurrence
\eqref{eq:polynomialsBk} with recurrence coefficients
\eqref{eq:coefficientsbcdk} have real and simple zeros in
$(0,\infty)$ with the interlacing property.
\end{prop}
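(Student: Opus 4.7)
The plan is to derive the proposition from the theory of Algebraic Chebyshev (AT) systems for multiple orthogonal polynomials on the stepline. My first step is to establish that the pair of Bessel weights $(w_1,w_2)$ in \eqref{Besselweights} forms an AT system on $(0,\infty)$ for the relevant multi-indices: for every choice of polynomials $p_1,p_2$ with $\deg p_j \le n_j-1$ and $(p_1,p_2)\neq(0,0)$, the function $p_1(x)w_1(x)+p_2(x)w_2(x)$ should have at most $n_1+n_2-1$ zeros in $(0,\infty)$, where $(n_1,n_2)$ is the multi-index of $B_k$ with $n_1+n_2=k$. This is a structural property of the modified Bessel weights and was analyzed in detail by Coussement and Van Assche in \cite{CVA1}; the key algebraic input is the contiguous/differential relation between $I_\alpha$ and $I_{\alpha+1}$, such as $zI_\alpha'(z)=zI_{\alpha+1}(z)+\alpha I_\alpha(z)$, which combined with a Rolle-type iteration produces the desired zero-counting bound.

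With the AT property in hand, the fact that $B_k$ has $k$ simple real zeros, all in $(0,\infty)$, follows from the classical sign-change argument. Suppose for contradiction that $B_k$ has only $\ell<k$ zeros of odd order $\xi_1<\cdots<\xi_\ell$ in $(0,\infty)$. Take $Q(x)=\prod_{j=1}^{\ell}(x-\xi_j)$, so that $B_k(x)Q(x)$ is of definite sign on $(0,\infty)$ and not identically zero. Since $\deg Q=\ell<n_1+n_2$, one can split $Q=p_1+p_2$ with $\deg p_j\le n_j-1$. The integral $\int_0^\infty B_k(x)[p_1(x)w_1(x)+p_2(x)w_2(x)]\,dx$ is then strictly of one sign, contradicting the orthogonality relations \eqref{MOPorthogonality}. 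Hence all $k$ zeros are simple and lie in $(0,\infty)$.

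For the interlacing of the zeros of $B_k$ and $B_{k+1}$, the idea is to run a variant of the same argument using the \emph{extra} orthogonality enjoyed by $B_{k+1}$. Assuming by contradiction that two consecutive zeros $y_i<y_{i+1}$ of $B_{k+1}$ contain no zero of $B_k$, I would build a test polynomial of the form $\tilde Q(x)=B_k(x)R(x)$, with $R$ a polynomial whose sign changes are chosen so that $\tilde Q(x)B_{k+1}(x)$ has definite sign on $(0,\infty)$ after division by $(x-y_i)(x-y_{i+1})$, and then write $\tilde Q/(x-y_i)(x-y_{i+1})$ as $p_1+p_2$ in the AT decomposition to reach a contradiction with the orthogonality of $B_{k+1}$. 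The main obstacle I anticipate is the stepline bookkeeping: the multi-indices of $B_k$ and $B_{k+1}$ differ in either the first or the second component depending on the parity of $k$, so the extra orthogonality condition is against $w_1$ in one case and $w_2$ in the other, and the choice of $R$ has to be adapted accordingly. This parity-by-parity analysis is already carried out by Coussement and Van Assche in \cite{CVA1}, \cite{CVA2}, and my plan is ultimately to reduce the interlacing to their argument once the AT property has been recorded.
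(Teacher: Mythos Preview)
Your overall strategy---derive simple zeros and interlacing from the AT-system property of the Bessel weights established in \cite{CVA1}---is the same as the paper's. However, your sign-change argument for simple zeros contains a genuine gap. Splitting the polynomial $Q=p_1+p_2$ does \emph{not} make $p_1(x)w_1(x)+p_2(x)w_2(x)$ share the sign pattern of $Q$; nothing prevents $p_1w_1+p_2w_2$ from having additional sign changes, so there is no reason for the integrand $B_k(p_1w_1+p_2w_2)$ to be of one sign. The correct step uses the Chebyshev \emph{interpolation} property: given the $\ell<n_1+n_2$ sign-change points $\xi_1,\dots,\xi_\ell$ of $B_k$, choose $p_1,p_2$ (with $\deg p_j\le n_j-1$, not both zero) so that $\varphi:=p_1w_1+p_2w_2$ vanishes at the $\xi_j$; since any nonzero element of the Chebyshev system has at most $n_1+n_2-1$ zeros, these zeros are simple and $\varphi$ changes sign precisely at the $\xi_j$. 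Then $B_k\varphi$ has constant sign on $(0,\infty)$, contradicting $\int B_k\varphi=\sum_j\int B_k\,p_j w_j=0$. Your interlacing sketch inherits the same confusion in its ``AT decomposition'' step.

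For interlacing the paper uses a different and cleaner device, due to \cite{AKLR} (not \cite{CVA1,CVA2}): one shows that no nontrivial combination $aB_{n_1,n_2}+bB_{n_1+1,n_2}$ can have a double real zero $\zeta$, because writing it as $(x-\zeta)^2R(x)$ would produce a polynomial $R$ of degree $\le n_1+n_2-1$ satisfying the full $(n_1,n_2)$ multiple orthogonality with respect to $((x-\zeta)^2w_1,(x-\zeta)^2w_2)$, which is again an AT system---impossible by degree count. The resulting nonvanishing of the Wronskian $B_{n_1,n_2}B'_{n_1+1,n_2}-B_{n_1+1,n_2}B'_{n_1,n_2}$ on $\mathbb R$ then yields interlacing directly, with no parity bookkeeping needed.
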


The proof of Proposition~\ref{Interlacing_Bk} is given in
Section~\ref{interlacing}.

\subsubsection{First rescaling}
In the first rescaling we let $t$ and $T$
depend on $n$, while $\alpha$ and $a$ remains fixed. We replace
\[ t \mapsto t/(2n), \qquad T \mapsto 1/(2n) \]
and we obtain the recurrence coefficients as in
\eqref{eq:coefficientsbcdkn}. The scaling limits of the recurrence
coefficients indeed exist:
\begin{equation}
\label{scaling_limits}
\begin{split}
\lim_{k/n\to s} b_{k,n} &= b(s) = a(1-t)^2+2st(1-t),\\
\lim_{k/n\to s} c_{k,n} &= c(s) = 2ast(1-t)^3+s^2t^2(1-t)^2,\\
\lim_{k/n\to s} d_{k,n} &= d(s) = as^2t^2(1-t)^4.
\end{split}
\end{equation}

Then as in \eqref{sympbolsAs} we have the associated family of symbols
\begin{align} \label{family_of_symbols0}
    A_s(z) & =z+b(s)+c(s)z^{-1}+d(s)z^{-2}
\end{align}
and the solutions $z_1(x,s)$, $z_2(x,s)$ and $z_3(x,s)$ of the algebraic equation $A_s(z)= x$.
We define $\Gamma_1(s)$ as in \eqref{Gamma1s} and similarly
\begin{equation} \label{Gamma2s}
    \Gamma_2(s) = \{ x \in \mathbb{C} \mid |z_2(x,s)| = |z_3(x,s)| \}.
    \end{equation}

The symbol \eqref{family_of_symbols0} with the functions $b(s)$, $c(s)$
and $d(s)$ from \eqref{scaling_limits} allows for a factorization
\begin{align} \label{family_of_symbols}
    A_s(z) = \frac{(z+a(1-t)^2)(z+st(1-t))^2}{z^2}.
\end{align}
From \eqref{family_of_symbols} we see that $A_s$ has three negative zeros,
namely a double zero at $-st(1-t)$ and a simple zero at $-a(1-t)^2$.
For the special value
\[ s = s^* = \frac{a(1-t)}{t} \]
the three zeros of the symbol coincide.

These facts are used to prove the following.
\begin{prop}
\label{Proposition_Gamma_1_2}
For each $s > 0$, we have that $\Gamma_1(s) \subset [0,\infty)$
and $\Gamma_2(s) \subset (-\infty,0]$. More precisely, there exist
$\eta(s) \leq 0 \leq \beta(s) < \gamma(s) $
so that
\begin{align} \label{Gamma12s-in-first-scaling}
    \Gamma_1(s) = [\beta(s), \gamma(s)], \qquad
    \Gamma_2(s) = (-\infty, \eta(s)].
    \end{align}
In addition we have
\begin{enumerate}
\item[\rm (a)] $s \mapsto \gamma(s)$ is strictly increasing for $s > 0$,  $\lim_{s\rightarrow 0^+}
\gamma(s)=a(1-t)^2$, and $\lim_{s\rightarrow \infty} \gamma(s)=+\infty$,
\item[\rm (b)] $s \mapsto \beta(s)$ is positive and strictly decreasing
for $0 < s < s^* = \frac{a(1-t)}{t} $ and $\beta(s) = 0$ for $s \geq s^*$.
Furthermore, $\lim_{s\rightarrow 0^+} \beta(s)=a(1-t)^2$,
\item[\rm (c)] $\eta(s) = 0$ for $0 < s \leq s^*$ and $s \mapsto \eta(s)$ is
negative, strictly decreasing for $s > s^*$ and $\lim_{s\rightarrow \infty} \eta(s)=-\infty$.
\end{enumerate}
\end{prop}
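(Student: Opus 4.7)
The plan is to read off the structure of $\Gamma_1(s)$ and $\Gamma_2(s)$ from the behavior of $A_s$ on the real line, using the factorization \eqref{family_of_symbols}. Abbreviate $u=st(1-t)$ and $v=a(1-t)^2$, so that $A_s(z)=(z+v)(z+u)^2/z^2$ and $s^*=a(1-t)/t$ is characterized by $u=v$. Differentiating the factored form shows that $A_s'(z)=0$ in $\mathbb{C}\setminus\{0\}$ precisely at $-u$ and at the two real roots $z_\pm=\tfrac12\bigl(u\pm\sqrt{u^2+8uv}\bigr)$ of $z^2-uz-2uv=0$, with $z_-<0<z_+$. The critical value at $-u$ equals $0$, and using the quadratic relation $z_\pm^2=u(z_\pm+2v)$ one checks that $A_s(z_\pm)$ has the same sign as $z_\pm+v$. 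Squaring gives $(u+2v)^2-(u^2+8uv)=4v(v-u)$, so $A_s(z_+)>0$ for every $s>0$, while $A_s(z_-)$ is positive, zero, or negative according to $s<s^*$, $s=s^*$, $s>s^*$. The same trichotomy controls the nature of the critical point at $-u$: since $A_s(z)\sim \tfrac{v-u}{u^2}(z+u)^2$ near $z=-u$, this point is a local minimum of $A_s|_{\mathbb{R}}$ for $s<s^*$ and a local maximum for $s>s^*$, with value $0$ in both cases.

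Sketching $A_s|_{\mathbb{R}}$ from these data, the critical values $\{0,A_s(z_-),A_s(z_+)\}$ partition the $x$-axis into intervals on which $A_s(z)=x$ has either three real solutions or one real solution together with a complex conjugate pair. On the latter intervals the conjugate pair has equal modulus, so the point lies in $\Gamma_1(s)\cup\Gamma_2(s)$; which of the two it is can be decided by comparing the modulus of the real root with that of the conjugate pair, either from the asymptotics at $x\to\pm\infty$ (where the unique real root has modulus $\sim|x|$) or by continuity from the boundary point where two real roots coalesce. On intervals with three distinct real roots, Vieta's relations combined with $c(s)>0$ in \eqref{scaling_limits} rule out equal moduli, since two distinct real roots of equal modulus would have to be opposites, forcing $c(s)<0$. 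This analysis produces \eqref{Gamma12s-in-first-scaling}: one has $\gamma(s)=A_s(z_+)$ for all $s>0$, while $\beta(s)=A_s(z_-)$ on $(0,s^*)$ and $\beta(s)=0$ on $[s^*,\infty)$, and $\eta(s)=0$ on $(0,s^*]$ while $\eta(s)=A_s(z_-)$ on $(s^*,\infty)$.

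For the monotonicity and limit statements, the key observation is that since $z_\pm(s)$ are critical points of $z\mapsto A_s(z)$, the chain rule yields
\[
\frac{d}{ds}A_s(z_\pm(s))=\frac{\partial A_s}{\partial s}(z_\pm(s))=\frac{2t(1-t)(z+v)(z+u)}{z^2}\bigg|_{z=z_\pm(s)}.
\]
At $z_+>0$ both factors in the numerator are positive, giving $\gamma'(s)>0$. At $z_-$ with $s<s^*$ one has $-v<z_-<-u$, so $z_-+v>0$ and $z_-+u<0$, giving $\beta'(s)<0$; at $z_-$ with $s>s^*$ one has $-u<z_-<-v$, so $z_-+v<0$ and $z_-+u>0$, giving $\eta'(s)<0$. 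The boundary values in (a)--(c) follow from the explicit expressions: as $s\to 0^+$, $u\to 0$ forces $z_\pm\to 0$ and $A_s(z_\pm)\to v=a(1-t)^2$; at $s=s^*$, $z_-$ meets $-u=-v$ and $A_{s^*}(z_-)=0=\beta(s^*)=\eta(s^*)$; and as $s\to\infty$, $z_+\sim u\to\infty$ with $A_s(z_+)\sim 4u\to\infty$, while $z_-\to -2v$ with $A_s(z_-)\sim -u^2/(4v)\to-\infty$. The main obstacle is the second step, identifying for each conjugate-pair interval whether it belongs to $\Gamma_1(s)$ or to $\Gamma_2(s)$; the bifurcation at $s=s^*$, where the critical point at $-u$ switches from a local minimum to a local maximum and $z_-$ crosses $-u$, requires careful bookkeeping of the relative moduli of the real root and the conjugate pair.
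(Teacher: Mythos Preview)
Your argument is correct and follows the same overall strategy as the paper---analyze the real graph of $A_s$ and its critical values---but the execution differs in several places worth noting. The paper first isolates two lemmas: one showing that two distinct solutions of $A_s(z)=x$ with equal modulus must be complex conjugates (using that $\theta\mapsto|A_s(\rho e^{i\theta})|$ is strictly monotone on $[0,\pi]$), and a corollary that $\Gamma_1(s)\cap\Gamma_2(s)$ is empty away from $s=s^*$. It then invokes a general theorem (Ullman; B\"ottcher--Grudsky) that $\Gamma_1(s)$ is connected, and only checks that the endpoints $\beta(s),\gamma(s),\eta(s)$ lie in the correct $\Gamma_j$. Your route replaces the first lemma by the Vieta observation that two distinct \emph{real} roots of equal modulus would force $c(s)<0$, and replaces the connectedness citation by a direct case analysis of the conjugate-pair intervals. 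For the monotonicity, the paper passes to $B(z,s)=A_s(sz)$ and notes $\partial B/\partial s=(z+t(1-t))^2/z>0$ for $z>0$, treating only $\gamma$ in detail; your computation of $\partial A_s/\partial s=2t(1-t)(z+v)(z+u)/z^2$ is more explicit and handles $\beta$ and $\eta$ uniformly via the sign of $(z_-+v)(z_-+u)$, which is a nice improvement.

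The one point to tighten is exactly the one you flag as the main obstacle. Your continuity argument for deciding $\Gamma_1$ versus $\Gamma_2$ on a conjugate-pair interval tacitly assumes that the modulus of the real root never equals that of the complex pair in the interior, i.e.\ that $|z_1|=|z_2|=|z_3|$ does not occur for $x$ strictly inside such an interval. This is true, but it is precisely what the paper's monotonicity-in-$\theta$ lemma supplies: if a real $r$ and a non-real $w$ satisfied $|r|=|w|$ and $A_s(r)=A_s(w)$, then $r=\bar w$ would force $w$ real. Either add this short argument, or cite the connectedness of $\Gamma_1(s)$ as the paper does; with that in place your proof is complete.
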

From the proposition it follows that the sets $\Gamma_1(s)$ and
$\Gamma_2(s)$ are intervals, and that $\Gamma_1(s)$ is increasing
as $s$ increases, while $\Gamma_2(s)$ decreases. See the left part
of Figure~\ref{figure_curves_gamma} for an illustrative plot of
$\gamma(s)$, $\beta(s)$ and $\eta(s)$, as a function of $s$.

\begin{figure}
\centering
\begin{overpic}[width=12cm,height=6.07cm]{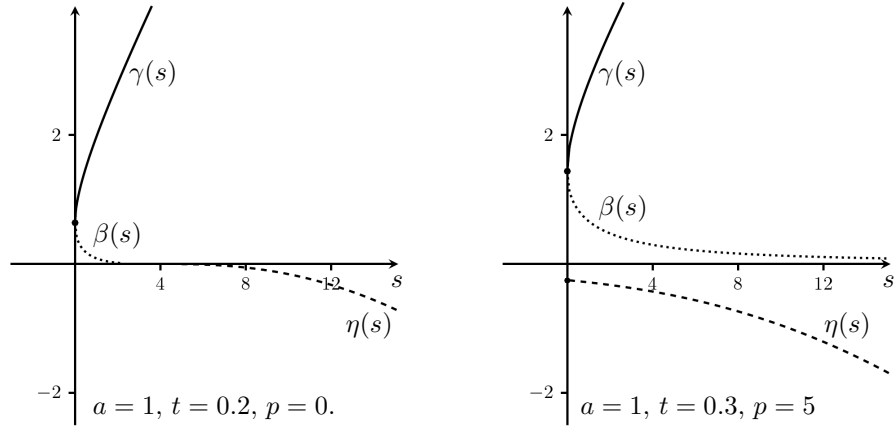}
\put(43,17){$s$}
\put(97.5,17){$s$}

\put(38,12){$\eta(s)$}
\put(10,22){$\beta(s)$}
\put(14,40){$\gamma(s)$}
\put(10,3){$a=1$, $t=0.2$, $p=0.$}

\put(66,25){$\beta(s)$}
\put(66,40){$\gamma(s)$}
\put(91,12){$\eta(s)$}
\put(64,3){$a=1$, $t=0.3$, $p=5$}
\end{overpic}
\caption{\label{figure_curves_gamma}Graph of the curves $\beta(s)$, $\gamma(s)$ and $\eta(s)$ for the first and second rescaling.}
\end{figure}

As a result of Propositions~\ref{Interlacing_Bk} and \ref{Proposition_Gamma_1_2} we see that the assumptions (a) and (b)
of Theorem~\ref{Theorem_zero_distribution_general} are satisfied, and
so for each $\xi > 0$, the weak limit of the normalized zero counting measures
of the polynomials $B_{k,n}$ as $k,n\to \infty$, $k/n \to \xi$, exists
and is given by \eqref{limitingmeasure}.

\subsubsection{Second rescaling}
In the second rescaling we let the parameter $\alpha$ increase as $n$ increases.
For this we change the variables $t$, $T$ and $\alpha$ by
\[ t\mapsto t/(2n),\qquad T \mapsto 1/(2n), \qquad \alpha \mapsto p n \]
with $p > 0$.

The recurrence coefficients \eqref{eq:coefficientsbcdk} now have
scaling limits that also depend on $p$. Indeed,
\begin{equation}
\label{scaling_limits2}
\begin{split}
b(s)&=a(1-t)^2+2st(1-t)+t(1-t)p,\\
c(s)&=2ast(1-t)^3+s^2t^2(1-t)^2+st^2(1-t)^2p,\\
d(s)&=as^2t^2(1-t)^4.
\end{split}
\end{equation}
Now the symbol \eqref{family_of_symbols} depends on  $p$
and we can again explicitly factorize
\begin{equation}
\label{family_of_symbols2}
    A_s(z) = \frac{(z+st(1-t))(z^2+(1-t)(a(1-t)+(s+p) t)z + ast(1-t)^3)}{z^2}.
    \end{equation}
There are again three negative zeros of the symbol, but now all three
zeros are simple.

We again have $\Gamma_1(s)$ and $\Gamma_2(s)$ as in \eqref{Gamma1s} and \eqref{Gamma2s},
and we prove the following.
\begin{prop}
\label{Proposition_Gamma_1_2_alpha}
Let $p > 0$.
Then for each $s > 0$, we have that $\Gamma_1(s) \subset (0,\infty)$
and $\Gamma_2(s) \subset (-\infty,-\frac{p^2 t^2}{4 a}]$. More precisely, there exist
$\eta(s) < 0 < \beta(s) < \gamma(s) $ so that
\begin{align} \label{Gamma12s-in-second-scaling}
    \Gamma_1(s) = [\beta(s), \gamma(s)], \qquad
    \Gamma_2(s) = (-\infty, \eta(s)].
    \end{align}
In addition we have
\begin{enumerate}
\item[\rm (a)] $s \mapsto \gamma(s)$ is strictly increasing for $s > 0$,
$\lim_{s\to 0+} \gamma(s)=(1-t)(a(1-t)+p t)$  and
$\lim_{s\to \infty} \gamma(s)=\infty$,
\item[\rm (b)] $s \mapsto \beta(s)$ is positive and strictly decreasing for $s > 0$ with
$\lim_{s\to 0+} \beta(s)=(1-t)(a(1-t)+p t)$ and
$\lim_{s\to \infty} \beta(s)=0$,
\item[\rm (c)] $s \mapsto \eta(s)$ is
negative and strictly decreasing for $s > 0$ with
$\lim_{s\to 0+} \gamma(s)=-\frac{p^2 t^2}{4 a}$ and
$\lim_{s\to \infty} \eta(s)=-\infty$.
\end{enumerate}
\end{prop}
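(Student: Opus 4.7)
I would follow the same strategy as for Proposition~\ref{Proposition_Gamma_1_2}: $\Gamma_1(s)$ and $\Gamma_2(s)$ are closed real intervals whose endpoints are critical values of $A_s$, i.e., images under $A_s$ of the real zeros of $A_s'$. After clearing denominators, $A_s'(z)=0$ reads
\[
z^3-c(s)z-2d(s)=0.
\]
The structural input comes from the factorization \eqref{family_of_symbols2}: for $s,p>0$ the symbol $A_s$ has three \emph{distinct} negative zeros, because the linear factor contributes $-st(1-t)$ and the quadratic factor has discriminant $(1-t)^2[(a(1-t)+t(p-s))^2+4pst^2]>0$ with two negative roots by Vieta. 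Combined with $A_s(-\infty)=-\infty$, $A_s(0^\pm)=+\infty$ (since $d(s)>0$), and $A_s(+\infty)=+\infty$, this forces $A_s$ to have exactly three real critical points $\xi_1<\xi_2<0<\xi_3$, with a local maximum at $\xi_1$ and local minima at $\xi_2,\xi_3$.

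Set $M_i:=A_s(\xi_i)$. The zero-crossing count on $(-\infty,0)$ forces $M_2<0<M_1$, and $M_3>M_1$ holds because otherwise $A_s(z)=x$ would acquire $3+2=5$ real roots for $x$ slightly above $M_3$, contradicting that it is cubic in $z$. Hence the cubic has three real roots for $x\in[M_2,M_1]\cup[M_3,\infty)$ and one real plus a complex-conjugate pair for $x\in(-\infty,M_2)\cup(M_1,M_3)$. Tracking moduli---on $(-\infty,M_2)$ the real branch escapes to $-\infty$ while the complex pair stays bounded near $\xi_2$, so $|z_1|=|z_{\mathrm{re}}|>|z_2|=|z_3|$; on $(M_1,M_3)$ the complex pair surrounds $\xi_3>0$ while the real branch stays in the bounded interval $(\xi_2,0)$, so $|z_1|=|z_2|>|z_3|$---identifies
\[
\Gamma_2(s)=(-\infty,M_2],\qquad \Gamma_1(s)=[M_1,M_3],
\]
so setting $\eta(s):=M_2<0$, $\beta(s):=M_1>0$, $\gamma(s):=M_3$ proves \eqref{Gamma12s-in-second-scaling}.

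For the monotonicity in (a)--(c) I would use the envelope identity
\[
\frac{dM_i}{ds}=\frac{\partial A_s}{\partial s}(\xi_i)=2t(1-t)+\frac{c'(s)}{\xi_i}+\frac{d'(s)}{\xi_i^{2}},
\]
from differentiating $M_i(s)=A_s(\xi_i(s))$ and invoking $A_s'(\xi_i)=0$. For $\xi_3>0$ each term is strictly positive, so $\gamma(s)$ is strictly increasing. For $\xi_1,\xi_2<0$, multiplying the right-hand side by $\xi^2$ yields the quadratic $Q(z)=2t(1-t)z^2+c'(s)z+d'(s)$; its discriminant equals $t^2(1-t)^4[(2a(1-t)+t(p-2s))^2+8pst^2]>0$, and its roots $\zeta_-<\zeta_+$ satisfy $\zeta_-\zeta_+=ast(1-t)^3>0$ and $\zeta_-+\zeta_+=-c'(s)/(2t(1-t))<0$, hence both are negative. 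The main obstacle is to verify the interlacing $\zeta_-<\xi_1<\xi_2<\zeta_+$, which forces $Q(\xi_1),Q(\xi_2)<0$ and therefore strict decrease of $\beta(s)$ and $\eta(s)$; this should follow by comparing the explicit cubic equation satisfied by $\xi_i$ with the quadratic for $\zeta_\pm$, exploiting that $\zeta_-\zeta_+=ast(1-t)^3$ coincides with the product of the two nontrivial negative zeros of $A_s$ from \eqref{family_of_symbols2}.

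Finally the limits follow from asymptotic analysis of the critical cubic. As $s\to 0^+$, the balance $z^3\sim c(s)z$ gives $\xi_1,\xi_3\sim\pm\sqrt{c(s)}\to 0$, while the separate balance $c(s)z\sim 2d(s)$ gives $\xi_2\sim -2d(s)/c(s)\to 0$ on a faster scale; substituting into $A_s$ yields $M_1,M_3\to b(0)=(1-t)(a(1-t)+pt)$, and using $c(s)^2/(4d(s))\to(2a(1-t)+pt)^2/(4a)$ yields
\[
M_2\to b(0)-\frac{(2a(1-t)+pt)^2}{4a}=-\frac{p^2t^2}{4a},
\]
which together with the monotonicity of $\eta$ also proves $\Gamma_2(s)\subset(-\infty,-p^2t^2/(4a)]$. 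As $s\to\infty$, $\xi_3\sim st(1-t)\to+\infty$ and $\xi_1\sim -st(1-t)\to-\infty$ give $\gamma(s)\to+\infty$ and $\eta(s)\to-\infty$, while a second-order expansion $\xi_1=-st(1-t)-tp(1-t)/2+o(1)$ together with the resulting cancellations in $A_s(\xi_1)$ yields $\beta(s)\to 0$.
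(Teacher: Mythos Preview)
Your overall strategy matches the paper's: identify the critical values of $A_s$ on the real line, argue that $\Gamma_1(s)$ and $\Gamma_2(s)$ are the intervals on which two roots are complex conjugate, and then differentiate the critical values in $s$ via the envelope identity. The structure arguments and the limit computations are essentially correct (modulo a labeling slip at $s\to\infty$: it is $A_s(\xi_2)\to-\infty$, with $\xi_2\to -2a(1-t)^2$ bounded, that gives $\eta(s)\to-\infty$, while $\xi_1\sim -st(1-t)$ is the critical point governing $\beta(s)\to 0$; you have these swapped).

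The genuine gap is the monotonicity of $\beta$ and $\eta$, which you yourself flag as ``the main obstacle.'' Your plan is to show the interlacing $\zeta_-<\xi_1<\xi_2<\zeta_+$ by comparing the quadratic $Q(z)=2t(1-t)z^2+c'(s)z+d'(s)$ with the cubic for $\xi_j$, using that $\zeta_-\zeta_+=ast(1-t)^3$ equals the product of the two roots $q_1,q_2$ of the quadratic factor in \eqref{family_of_symbols2}. That observation alone only gives $q_1<\zeta_-\le\zeta_+<q_2$ (same product, larger sum), which does not locate $\xi_1,\xi_2$ relative to $\zeta_\pm$. The paper bypasses this entirely with a change of variable: set $B(w,s)=A_s(sw)$. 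In these coordinates only the factor $(sw+a(1-t)^2)$ in \eqref{family_of_symbols2} depends on $s$, so
\[
\frac{\partial B}{\partial s}(w,s)=\frac{(w+t(1-t))^2}{w},
\]
which is strictly positive for $w>0$ and strictly negative for $w<0$, $w\neq -t(1-t)$. Since the critical values of $A_s$ coincide with those of $B(\cdot,s)$, the envelope identity gives
\[
\frac{dM_j}{ds}=\frac{\partial B}{\partial s}\!\left(\frac{\xi_j}{s},s\right)=\frac{(\xi_j+st(1-t))^2}{s\,\xi_j}.
\]
For $\xi_3>0$ this is positive (so $\gamma$ is increasing), and for $\xi_1,\xi_2<0$ it is strictly negative provided $\xi_j\neq -st(1-t)$. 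But $-st(1-t)$ is a \emph{zero} of $A_s$, and for $p>0$ the three zeros are distinct, so the critical points (lying strictly between consecutive zeros) can never equal $-st(1-t)$. This gives strict decrease of $\beta$ and $\eta$ with no interlacing analysis needed. Equivalently, if you want to stay in your coordinates: using the critical-point relation $\xi_j^3=c(s)\xi_j+2d(s)$ together with the identities $sd'(s)=2d(s)$ and $sc'(s)=c(s)+s^2t^2(1-t)^2$ one checks directly that $sQ(\xi_j)=\xi_j(\xi_j+st(1-t))^2$, which is negative for $\xi_j<0$.

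One smaller point: your modulus-tracking on $(M_1,M_3)$ is not quite right as stated---near $x=M_1^+$ the complex pair is near $\xi_1$, not $\xi_3$. The conclusion $|z_{\text{complex}}|>|z_{\text{real}}|$ still holds there (the real root lies in $(r_3,0)$ while $|\xi_1|>|r_2|>|r_3|$), but to rule out a crossing in the interior of $(M_1,M_3)$ you need the observation (Lemma~\ref{lem:x_in_R} in the paper) that two distinct roots of $A_s(z)=x$ with equal modulus must be complex conjugates, which follows from the strict monotonicity of $\theta\mapsto|A_s(\rho e^{i\theta})|$ on $[0,\pi]$.
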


See the right part of Figure~\ref{figure_curves_gamma} for a plot
of the functions $\beta(s)$, $\gamma(s)$ and $\eta(s)$ in the case
$p = 5$.

Also in the second scaling  the assumptions of Theorem~\ref{Theorem_zero_distribution_general}
are satisfied, and again it follows that for each $\xi > 0$, the weak limit
of the normalized zero counting measures
of the polynomials $B_{k,n}$ as $k,n\to \infty$, $k/n \to \xi$, exists
and is given by \eqref{limitingmeasure}.

The proof of Propositions~\ref{Proposition_Gamma_1_2} and \ref{Proposition_Gamma_1_2_alpha}
are given in Section~\ref{sec:proof_propositions}.

\subsection{Equilibrium problem}

In both scalings we find for each $\xi > 0$ a probability measure of the form
\begin{align} \label{nu1xi-as-integral}
    \nu_1^{\xi} = \frac{1}{\xi} \int_0^{\xi} \mu_1^s \, ds
    \end{align}
as the weak limit of the normalized zero counting measures.
The main result of the paper is that this measure can also be obtained
as the first component of a vector of measures $(\nu_1^{\xi}, \nu_2^{\xi})$
that satisfies a vector equilibrium problem. For the case $p = 0$
and $\xi = 1$, it reduces to the vector equilibrium problem
\eqref{eq:energyfunctional}--\eqref{eq:constraintdensity}
stated in the introduction.

We will use a recent result of Duits and Kuijlaars \cite{DK1}
which in the present context says that the measure $\mu_1^s$
that gives the limiting eigenvalue distribution of the
Toeplitz matrices with symbol \eqref{family_of_symbols} is
part of a vector $(\mu_1^s, \mu_2^s)$ that is characterized
by a vector equilibrium problem.

The second measure $\mu_2^s$ is supported on $\Gamma_2(s)$
(see \eqref{Gamma2s}) and is given by
\begin{equation} \label{eq:densitymu2}
    d\mu^s_2(x) = \frac{1}{2\pi i}
    \left(\frac{z'_{2-}(x,s)}{z_{2-}(x,s)}-\frac{z'_{2+}(x,s)}{z_{2+}(x,s)} \right)dx
    \end{equation}
for $x \in \Gamma_2(s)$. Then $\mu_2^s$ is indeed  a positive measure
on $\Gamma_2(s)$ with total mass $1/2$.

The result of \cite{DK1} in this special case is the following.
\begin{thm} \label{equilibrium_problem_Toeplitz}
For each $s > 0$ we have that the vector $(\mu_1^s, \mu_2^s)$
is the unique minimizer for the energy functional
\begin{multline}
\label{equilibrium_problem_mu1_mu2}
    \iint \log \frac{1}{|x-y|} d\mu_1(x)d\mu_1(y) +  \iint \log \frac{1}{|x-y|} d\mu_2(x)d\mu_2(y)\\
    - \iint \log \frac{1}{|x-y|}d\mu_1(x)d\mu_2(y)
\end{multline}
among all vectors $(\mu_1, \mu_2)$ satisfying $\supp(\mu_j) \subset \Gamma_j(s)$ for $j=1,2$, and
\[ \int d \mu_1 = 1, \qquad \int d\mu_2 = \frac{1}{2}. \]

The measures $\mu_1^s$ and $\mu_2^s$ satisfy for some constant $\ell^s$,
\begin{align}
\label{variational_mu_1}
2\int \log |x-y|d\mu^s_1(y)-\int \log |x-y|d\mu^s_{2}(y) & =\ell^s, \qquad x\in \Gamma_1(s), \\
\label{variational_mu_2} 2\int \log |x-y|d\mu^s_2(y)-\int \log
|x-y|d\mu^s_{1}(y) & =0, \qquad \, x\in \Gamma_2(s).
\end{align}
\end{thm}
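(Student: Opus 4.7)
The plan is to establish the theorem by direct verification, using the fact that both $\mu_1^s$ and $\mu_2^s$ can be read off from the three-sheeted algebraic curve $A_s(z)=x$. The statement is a special ($m=3$) case of the main result of \cite{DK1}, so an alternative would be to check that the hypotheses of that theorem apply to $A_s$; the self-contained route I sketch is essentially the same work.

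The starting point is Vieta's formula for the cubic $z^3+(b(s)-x)z^2+c(s)z+d(s)=0$, which gives $z_1(x,s)z_2(x,s)z_3(x,s)=-d(s)$. Differentiating $\log$ with respect to $x$ yields the sum identity
\[
    \frac{z_1'}{z_1}+\frac{z_2'}{z_2}+\frac{z_3'}{z_3}=0.
\]
Writing $F_j$ for the Cauchy transform of $\mu_j^s$, I would combine this with \eqref{mu_1_identity_1} (which reads $F_1(x)=z_1'(x,s)/z_1(x,s)$), the asymptotics $z_1\sim x$ and $z_2z_3\sim -d(s)/x$ as $x\to\infty$, and analyticity of the $z_j$ outside the cuts, to derive the companion representation
\[
    F_2(x)=-\frac{z_3'(x,s)}{z_3(x,s)},\qquad x\in\mathbb{C}\setminus\Gamma_2(s).
\]
Reading off the $x^{-1}$-coefficient then gives $\int d\mu_2^s=\tfrac12$, and positivity of $\mu_2^s$ follows because on $\Gamma_2(s)\subset\mathbb{R}$ the boundary values $z_{2\pm}$ are complex conjugates, so the density \eqref{eq:densitymu2} has a definite sign once the orientation is fixed.

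Next I would verify the Euler--Lagrange equation \eqref{variational_mu_1}. Differentiating along $\Gamma_1(s)$ and applying Plemelj--Sokhotski, it reduces to the jump relation
\[
    F_{1+}(x)+F_{1-}(x)=F_2(x),\qquad x\in\Gamma_1(s).
\]
Since the branches swap as $z_{1\pm}(x)=z_{2\mp}(x)$ on $\Gamma_1(s)$, adding the two instances $\frac{z_{1\pm}'}{z_{1\pm}}+\frac{z_{2\pm}'}{z_{2\pm}}+\frac{z_3'}{z_3}=0$ of the sum identity yields $\frac{z_{1+}'}{z_{1+}}+\frac{z_{1-}'}{z_{1-}}=-\frac{z_3'}{z_3}$, which together with the Cauchy transform representations above is exactly the required jump. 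The condition \eqref{variational_mu_2} follows by the symmetric argument on $\Gamma_2(s)$ with the roles of $z_1$ and $z_3$ interchanged. To promote these derivative identities back to the equations \eqref{variational_mu_1}--\eqref{variational_mu_2} themselves, I would observe that the two sides of each variational equation differ by a function that is constant on each connected component of the support, and fix the constant $\ell^s$ by one evaluation.

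Uniqueness is then automatic: the interaction matrix of the energy \eqref{equilibrium_problem_mu1_mu2} is $\left(\begin{smallmatrix}2 & -1 \\ -1 & 2\end{smallmatrix}\right)$, which is positive definite, so the functional is strictly convex on the admissible class, and its Euler--Lagrange conditions identify the minimizer uniquely. The main obstacle, I expect, is the global branch bookkeeping: one must verify that the labelling of $z_1,z_2,z_3$ by decreasing modulus is consistent with the local analytic continuations across both cuts, and that no branch point hides outside $\partial\Gamma_1(s)\cup\partial\Gamma_2(s)$. Given the explicit factorizations \eqref{family_of_symbols} and \eqref{family_of_symbols2}, this is a finite check, but it is where the real work lies.
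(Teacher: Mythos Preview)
The paper does not prove this theorem; it is quoted as the $m=3$ special case of the main result of \cite{DK1} (and the identities \eqref{variational_mu_1_log}--\eqref{variational_mu_2_log} used later in Section~\ref{sec:proof_theorem_Var_conditions} are likewise taken from there). Your direct verification via Vieta, the Cauchy-transform representations $F_1=z_1'/z_1$ and $F_2=-z_3'/z_3$, and the branch-swap jump relations is correct and is precisely the \cite{DK1} argument specialized to this symbol; the only detail to add is that the integration constant on $\Gamma_2(s)$ in \eqref{variational_mu_2} is pinned to $0$ (not just to some constant) by letting $x\to-\infty$ along the unbounded set $\Gamma_2(s)$ and comparing the $\log|x|$ growth of the two potentials, which cancels since $2\cdot\tfrac12-1=0$.
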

The conditions \eqref{variational_mu_1}--\eqref{variational_mu_2}
are the Euler-Lagrange variational conditions for the vector equilibrium
problem.

Recall that we have \eqref{nu1xi-as-integral} and similarly we put for $\xi > 0$,
\begin{align} \label{nu2xi-as-integral}
    \nu_2^{\xi} = \frac{1}{\xi} \int_0^{\xi} \mu_2^s \, ds.
    \end{align}
Then $\nu_2^{\xi}$ is a measure on $\bigcup_{s<\xi}\Gamma_2(s)=(-\infty,-p^2t^2/4a]$
with total mass $1/2$. We obtain the vector equilibrium problem for $(\nu_1^{\xi}, \nu_2^{\xi})$ by
integrating the vector equilibrium problem for $(\mu_1^s, \mu_2^s)$ with
respect to $s$, in
particular the variational conditions \eqref{variational_mu_1}--\eqref{variational_mu_2}.
A complication is that the intervals $\Gamma_1(s)$ and $\Gamma_2(s)$ are varying with $s$.
The fact that $\Gamma_1(s)$ is increasing with $s$ induces, after integration,
an external field on $\nu_1^{\xi}$. The fact that $\Gamma_2(s)$ is decreasing
as $s$ increases leads to the upper constraint on $\nu_2^{\xi}$.

\begin{figure}
\centering
\begin{overpic}[width=12.45cm,height=5.72cm]{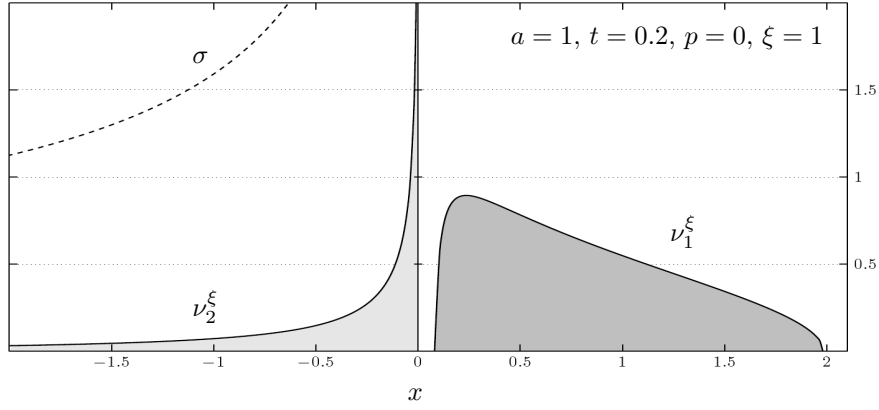}
    \put(75,17){$\nu_1^\xi$}
    \put(24,36){$\sigma$}
    \put(24,9){$\nu_2^\xi$}
    \put(58,38){$a=1$, $t=0.2$, $p=0$, $\xi=1$}
    \put(47,0) {$x$}
\end{overpic}
\caption{Graphs of the densities of $\sigma$ (dashed) and
$\nu_2^\xi$ on the negative half line and the density of
$\nu_1^\xi$ on the positive half line, for the case $p = 0$
and $t < t^*$.}\label{plot:densities_sigma_nu_a}
\end{figure}

\begin{thm} \label{proposition_V(x)_nu_1}
Define
\begin{align} \label{definition-of-V}
    V(x) = \int_0^{\infty} \log \left|\frac{z_1(x,s)}{z_2(x,s)} \right| \, ds
    \end{align}
    and
\begin{align} \label{definition-of-sigma}
    \sigma = \int_0^{\infty} \mu_2^s \, ds.
    \end{align}

Then for every $\xi > 0$, the vector of measures $(\nu^\xi_1,\nu^\xi_2)$ is the
unique minimizer for the energy functional
\begin{multline} \label{functionalwithV}
    \iint \log \frac{1}{|x-y|} d\nu_1(x)d\nu_1(y) + \iint \log \frac{1}{|x-y|}d\nu_2(x)d\nu_2(y)\\
    -\iint \log \frac{1}{|x-y|} d\nu_1(x)d\nu_2(y) +\frac{1}{\xi} \int V(x) d\nu_1(x),
\end{multline}
over all vectors of measures $(\nu_1,\nu_2)$ such that $\supp(\nu_1)\subset [0,\infty)$,
$\int d\nu_1 = 1$, $\supp(\nu_2)\subset (-\infty,0]$, $\int d\nu_2 = 1/2$, and
\[ \nu_2 \leq \frac{1}{\xi} \sigma. \]
\end{thm}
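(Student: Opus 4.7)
The plan is to derive the Euler--Lagrange conditions for \eqref{functionalwithV} by integrating those of Theorem~\ref{equilibrium_problem_Toeplitz} over $s\in(0,\xi)$ against $ds/\xi$. The quadratic interaction in \eqref{functionalwithV} is strictly convex on signed measures of prescribed mass (its Gram matrix $\bigl(\begin{smallmatrix}\phantom{-}2&-1\\-1&\phantom{-}2\end{smallmatrix}\bigr)$ being positive definite), so once the variational conditions and the constraint are verified, the minimizer is unique. The upper constraint is automatic: for $\mu_2^s\ge 0$,
\[
\nu_2^\xi=\frac{1}{\xi}\int_0^\xi\mu_2^s\,ds\le \frac{1}{\xi}\int_0^\infty\mu_2^s\,ds=\frac{\sigma}{\xi}.
\]

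\textbf{Emergence of the external field.} Fix $x\in\supp(\nu_1^\xi)\subset[0,\infty)$. By Propositions~\ref{Proposition_Gamma_1_2} and \ref{Proposition_Gamma_1_2_alpha}, the sets $\Gamma_1(s)$ are nested increasingly in $s$, so there exists $s_0(x)<\xi$ with $x\in\Gamma_1(s)$ if and only if $s\ge s_0(x)$. Since $|z_1(x,s)|=|z_2(x,s)|$ on $\Gamma_1(s)$, the integrand of \eqref{definition-of-V} vanishes for $s\ge s_0(x)$, so
\[
V(x)=\int_0^{s_0(x)}\log\Bigl|\frac{z_1(x,s)}{z_2(x,s)}\Bigr|\,ds.
\]
Integrating \eqref{mu_1_identity_1} in $x$ and normalizing by $z_1(x,s)\sim x$ at infinity gives $\int\log|x-y|\,d\mu_1^s(y)=\log|z_1(x,s)|$, and the parallel formula in \cite{DK1} expresses $\int\log|x-y|\,d\mu_2^s(y)$ in terms of the branches off the cuts; together these show that the off-support discrepancy in \eqref{variational_mu_1} at $x$ for $s<s_0(x)$ is precisely $\log|z_1(x,s)/z_2(x,s)|$ (up to sign). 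Averaging \eqref{variational_mu_1} over $s\in(0,\xi)$ and combining its equality for $s\ge s_0(x)$ with the above expression for $V(x)$ yields the Euler--Lagrange equality
\[
2U^{\nu_1^\xi}(x)-U^{\nu_2^\xi}(x)+\frac{V(x)}{\xi}=\text{const},\qquad x\in\supp(\nu_1^\xi).
\]

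\textbf{Second condition and the constraint.} An analogous integration of \eqref{variational_mu_2} produces the Euler--Lagrange condition for $\nu_2^\xi$. The monotonicity of $\eta(s)$ asserted in Propositions~\ref{Proposition_Gamma_1_2}--\ref{Proposition_Gamma_1_2_alpha} means that for $x$ with $\eta(\xi)<x\le 0$ one has $x\in\Gamma_2(s)$ for all $s\ge s_1(x)$ with $s_1(x)<\xi$; the entire contribution to $\sigma$ coming from $(s_1(x),\infty)$ is concentrated near $x$, but $\nu_2^\xi$ only captures the piece on $(s_1(x),\xi)$. This forces $\nu_2^\xi=\sigma/\xi$ on the interval where the constraint binds and gives the Euler--Lagrange inequality of the correct sign there, while on the complement the constraint is strict and equality holds. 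Together with the first condition and strict convexity, this identifies $(\nu_1^\xi,\nu_2^\xi)$ as the unique minimizer.

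\textbf{Main obstacle.} The technical heart of the argument is the identification of the off-support discrepancies in \eqref{variational_mu_1}--\eqref{variational_mu_2} as explicit combinations of $\log|z_j(x,s)|$, drawing on the Riemann-surface analysis of \cite{DK1} for the symbol \eqref{family_of_symbols}. The delicate step is to check that, upon integration, the contributions from the $s$-ranges where $x\notin\Gamma_1(s)$ or $x\notin\Gamma_2(s)$ accumulate to exactly $V(x)/\xi$ (whose defining integral \eqref{definition-of-V} runs over the full half-line $(0,\infty)$) and to the correct complementary slackness for $\nu_2^\xi\le \sigma/\xi$. Once this accounting is done, strict convexity of the energy delivers uniqueness in the standard way.
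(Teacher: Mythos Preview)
Your approach is the same as the paper's: integrate the extended variational identities from \cite{DK1} (the paper records them as \eqref{variational_mu_1_log}--\eqref{variational_mu_2_log}, valid for all $x\in\mathbb{C}$ with right-hand sides $\log|z_1/z_2|$ and $\log|z_2/z_3|$) over $s\in(0,\xi)$, then use the monotonicity of $\Gamma_1(s)$ and $\Gamma_2(s)$ from Propositions~\ref{Proposition_Gamma_1_2}--\ref{Proposition_Gamma_1_2_alpha} to convert the integrated identities into the Euler--Lagrange conditions \eqref{variational_nu_1}--\eqref{variational_nu_2}.

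Two slips to correct. First, your $\Gamma_2$ bookkeeping is reversed: since $\eta$ is strictly decreasing and $\Gamma_2(s)=(-\infty,\eta(s)]$, one has $x\in\Gamma_2(s)$ if and only if $s\le s_1(x)$, not $s\ge s_1(x)$; hence the contribution to the density of $\sigma$ at such $x$ comes from $s\in(0,s_1(x))$, not from $(s_1(x),\infty)$, and your sentence about $\nu_2^\xi$ ``capturing the piece on $(s_1(x),\xi)$'' needs to be rewritten accordingly. The paper handles the second condition more directly: the integrated right-hand side is $\frac{1}{\xi}\int_0^\xi\log|z_2(x,s)/z_3(x,s)|\,ds\ge 0$, with equality precisely when $x\in\bigcap_{s\le\xi}\Gamma_2(s)=\Gamma_2(\xi)=\supp(\sigma-\xi\nu_2^\xi)$. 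Second, for the first condition you only argue the equality on $\supp(\nu_1^\xi)$ and omit the inequality off the support; the paper gets it in one line from $|z_1|\ge|z_2|$, which makes $\frac{1}{\xi}\int_0^\xi\log|z_1/z_2|\,ds\le\frac{1}{\xi}\int_0^\infty\log|z_1/z_2|\,ds=V(x)/\xi$ for every $x>0$.
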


The measures $\nu_1^{\xi}$ and $\nu_2^{\xi}$ are characterized by
the following variational conditions
\begin{align}
\label{variational_nu_1}
2\int \log |x-y| d\nu^\xi_1(y) -\int \log |x-y| d\nu^\xi_2(y) + \frac1\xi V(x)
    &\begin{cases} = \ell, \quad \text{for } x\in  \supp (\nu^\xi_1), \\
    \leq \ell, \quad \text{for } x\in [0,\infty), \end{cases}
    \end{align}
for some $\ell$, and
    \begin{align}
\label{variational_nu_2}
2\int \log |x-y| d\nu^\xi_2(y) -  \int \log |x-y| d\nu^\xi_1(y)  &
    \begin{cases} =0, \quad \text{for } x \in \operatorname{supp}(\sigma-\xi\nu^\xi_2), \\
    > 0,\quad \text{for } x \in \mathbb{C} \setminus \operatorname{supp}(\sigma-\xi\nu^\xi_2).
    \end{cases}
\end{align}

The proof of Theorem~\ref{proposition_V(x)_nu_1}
is given in Section~\ref{sec:proof_theorem_Var_conditions}.

\subsection{Evaluation of $V$ and $\sigma$}
In a final result we are able to evaluate the integrals \eqref{definition-of-V}
and \eqref{definition-of-sigma} that define $V$ and $\sigma$.

\begin{thm} \label{thm:external_field}
For every $p\geq 0$, we have
\begin{multline}
\label{external_field}
    V(x) = \frac{x}{t(1-t)} - \frac{\sqrt{p^2t^2 + 4ax}}{t}
        - p \log \left( \sqrt{p^2 t^2 + 4ax} - p t \right) \\
        + \frac{a(1-t)}{t} + p \log \left(2a(1-t) \right),
\end{multline}
and $\sigma$ is the measure on $(-\infty,0]$ with density
\begin{equation}
\label{eq:density_constraint} \frac{d\sigma(x)}{dx} = \left\{ \begin{array}{cl}
    \frac{\sqrt{4a|x|-p^2 t^2}}{2\pi t |x|}, &  \text{ for }
        x\in \left(-\infty,- \frac{p^2t^2}{4a}\right], \\
    0 &  \text{ for } x \in \left(- \frac{p^2t^2}{4a},0\right].
    \end{array} \right.
\end{equation}
\end{thm}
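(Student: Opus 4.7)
The plan is to evaluate both integrals using a rational parametrization of the algebraic curve $A_s(z)=x$ for fixed $x$. Writing $L=1-t$ and $w=z+tLs$, the equation $A_s(z)=x$ rearranges to $(z+aL^2)w^2 + ptL\,zw - xz^2 = 0$, a conic in $(z,w)$. Setting $v=w/z$ reduces this to $(z+aL^2)v^2+ptLv-x=0$, which is linear in $z$, and yields the explicit rational parametrization
\[
z(v) = -\frac{aL^2(v-v_+)(v-v_-)}{v^2}, \qquad s(v) = -\frac{aL}{t}\cdot\frac{(v-1)(v-v_+)(v-v_-)}{v^2},
\]
where $v_\pm = \bigl(-pt\pm\sqrt{p^2t^2+4ax}\bigr)/(2aL)$ are the roots of $aL^2v^2+ptLv-x=0$. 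For each fixed $s$, the three roots $z_1,z_2,z_3$ of $A_s(z)=x$ correspond to the three $v$-roots of $s(v)=s$.

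To compute $V(x)$ I would first take $x$ real with $x>a(1-t)^2+pt(1-t)$, so $v_-<0<1<v_+$. As $s$ grows from $0$ to the critical value $s_c(x)$ at which $s'(v)$ vanishes, the $v$-root $v_{[1]}(s)$ starting at $1$ moves along the real axis to some $v_c\in(1,v_+)$, and the $v$-root $v_{[2]}(s)$ starting at $v_+$ moves back to the same $v_c$; at $s=s_c$ they coalesce, and for $s>s_c$ they become complex conjugates, so $|z_1|=|z_2|$ and the integrand $\log|z_1/z_2|$ vanishes. Changing variables from $s$ to $v$ in each piece and joining them along $[1,v_+]$ yields
\[
V(x)=\int_1^{v_+}\log z(v)\,s'(v)\,dv.
\]
Integrating by parts (noting that $s$ vanishes at both endpoints), and using $z'/z=\frac{1}{v-v_+}+\frac{1}{v-v_-}-\frac{2}{v}$ together with $v_++v_-=-pt/(aL)$ and $v_+v_-=-x/(aL^2)$, the integrand collapses to an elementary rational expression, leaving
\[
V(x)=\frac{1}{tL}\int_1^{v_+}\frac{(v-1)(2x-ptLv)}{v^3}\,dv.
\]
Explicit antidifferentiation and substitution of $v_+=(\sqrt{p^2t^2+4ax}-pt)/(2aL)$, together with the identity $aL^2v_+^2+ptLv_+=x$, reduces the right-hand side to the stated closed form \eqref{external_field}.

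For the density of $\sigma$ I would use Stieltjes--Plemelj inversion. The closed-form $V$ extends analytically in $x$ off the branch cut $(-\infty,-p^2t^2/(4a)]$ inherited from $\sqrt{p^2t^2+4ax}$. A direct computation gives
\[
V'(x)=\frac{1}{t(1-t)}-\frac{\sqrt{p^2t^2+4ax}+pt}{2tx},
\]
and the boundary values $\sqrt{p^2t^2+4ax}|_{x\pm i0}=\pm i\sqrt{-p^2t^2-4ax}$ for $x<-p^2t^2/(4a)$ supply the jump of $V'$. Identifying this with the Plemelj jump of the Cauchy transform of $\sigma$ yields $\rho_\sigma(x)=\frac{1}{\pi}\mathrm{Im}\,V'(x+i0)=\sqrt{-p^2t^2-4ax}/(2\pi t|x|)$, which is exactly \eqref{eq:density_constraint}. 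The hard part is the branch-tracking in the second paragraph: one needs to verify carefully, using the structure of the cubic $s(v)=s$ in the $v$-plane, that $v_{[1]}(s)$ and $v_{[2]}(s)$ really do trace the stated real segments and merge at $v_c$. Once the formula for $V$ is established on this open range of $x$, analytic continuation in $x$ extends it to all $x>-p^2t^2/(4a)$, after which the jump computation for $\sigma$ is immediate.
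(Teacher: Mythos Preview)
Your computation of $V$ is correct and takes a genuinely different route from the paper. The paper differentiates under the integral sign to obtain $V'(x)=\int_0^{s^*(x)}\bigl(\tfrac{z_1'}{z_1}-\tfrac{z_2'}{z_2}\bigr)\,ds$, then uses the substitution $\tilde z_j=z_j/s$ together with the identity
\[
\frac{1}{\tilde z_j}\frac{\partial\tilde z_j}{\partial x}
   \;=\;\frac{\partial}{\partial s}\,\frac{1}{\tilde z_j+t(1-t)}
\]
(a consequence of the special form of $B(z,s)=A_s(sz)$) to rewrite the integrand as a total $s$-derivative; the $s=s^*(x)$ boundary term vanishes and the $s\to0$ term is read off from Lemma~\ref{asymptotic_solutions_alpha}, giving $V'$ in closed form, after which one antidifferentiation recovers $V$. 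Your parametrization by $v=w/z$ is essentially the inverse manoeuvre: solving for $z$ and $s$ explicitly as rational functions of $v$ lets you compute $V$ itself as a single real integral over $[1,v_+]$, bypassing $V'$. Both arguments exploit the rationality of the curve $\{A_s(z)=x\}$ in the $(z,s)$ variables; yours is more explicit but needs the branch-tracking you flag, while the paper's is more systematic and transfers verbatim to the $\sigma$ computation.

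Your treatment of $\sigma$, however, has a real gap. You take the jump of the analytic continuation of $V'$ across $(-\infty,-p^2t^2/(4a)]$ and then ``identify this with the Plemelj jump of the Cauchy transform of $\sigma$.'' But nothing in the setup says that $V'$ is (up to an entire piece) the Cauchy transform of $\sigma$: the integral \eqref{definition-of-V} defining $V$ actually diverges for $x<0$, and $\sigma$ has infinite mass, so the Stieltjes--Plemelj formalism does not apply as stated. The reason the numbers match is structural rather than potential-theoretic: in the paper's computation both $V'(x)$ and $2\pi i\,d\sigma/dx$ reduce to evaluations of $F(\tilde z_j)=1/(\tilde z_j+t(1-t))$ at $s\to0$, and the limits $\tilde z_2(x,0)$ and $\tilde z_3(x,0)$ are the two branches of the \emph{same} quadratic with discriminant $p^2t^2+4ax$. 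In your language, $v_+$ and $v_-$ interchange under monodromy around $x=-p^2t^2/(4a)$, which explains why the jump of your $V'$ coincides with the $\sigma$-density, but this has to be argued. The clean fix is to run your own parametrization directly on the defining integral $2\pi i\,\tfrac{d\sigma}{dx}=\int_0^{s^*(x)}\bigl(\tfrac{z_{3+}'}{z_{3+}}-\tfrac{z_{2+}'}{z_{2+}}\bigr)\,ds$: for $x<-p^2t^2/(4a)$ the $v$-contour now joins the complex conjugate points $v_-$ and $v_+$ through the real critical value, and the same integration by parts gives \eqref{eq:density_constraint}.
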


\begin{figure}
\centering
\begin{overpic}[width=8.26cm,height=5.72cm]{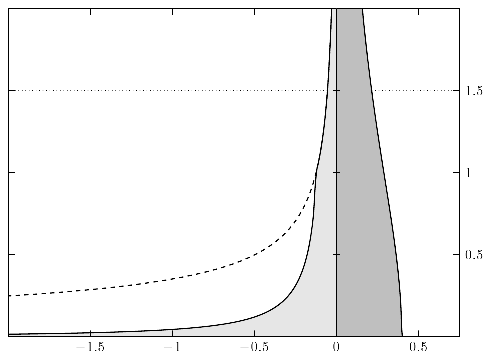}
    \put(80,30){$\nu_1^\xi$}
    \put(40,20){$\sigma$}
    \put(40,12){$\nu_2^\xi$}

    \put(10,57){$a=1$, $t=0.9$, $p=0$, $\xi=1$}
    \put(50,0) {$x$}
\end{overpic}
\caption{Graphs of the densities of $\sigma$ (dashed) and
$\nu_2^\xi$ on the negative half line and the density of
$\nu_1^\xi$ on the positive half line, for the case $p = 0$
and $t>t^*$.}\label{plot:densities_sigma_nu_b}
\end{figure}

Note that for $p = 0$, the external field \eqref{external_field} is
\begin{equation}
    V(x) = \frac{x}{t(1-t)}-\frac{2\sqrt{ax}}{t} + \frac{a(1-t)}{t}=\frac{1}{t(1-t)}(\sqrt{x}-\sqrt{a}(1-t))^2
\end{equation}
and the constraint \eqref{eq:density_constraint} is the measure
with density
\[ \frac{d\sigma(x)}{dx}=\frac{\sqrt{a}}{\pi t}|x|^{-1/2},\quad x\in(-\infty,0]. \]


\begin{rem}
Figure~\ref{plot:densities_sigma_nu_a} shows the graph of the
densities of $\sigma$, $\nu_1^\xi$ and $\nu_2^\xi$ in the case $p=0$ and $t$
below the critical time $t^*$. This corresponds to the case
where the non-intersecting paths did not come to the hard edge.
The constraint $\sigma$ is not active
and $\nu_1^\xi$ is supported on a interval which is at a positive distance from zero.
Figure~\ref{plot:densities_sigma_nu_b} illustrates the case $p=0$ for $t>t^*$. Here the constraint $\sigma$ is active in an interval.

Figure~\ref{plot:densities_sigma_nu_p} shows the densities of $\sigma$, $\nu_1^\xi$ and $\nu_2^\xi$
for $p>0$. For all values of $t$, the constraint $\sigma$ is active in some interval
and $\nu_1^\xi$ is supported on an interval which is at a positive distance from zero.
\end{rem}

\begin{rem}
It was pointed out that if we let $n\to \infty$, the paths fill out a region in the $tx$-plane.
This can be observed in the left figure of  Figure~\ref{figureNon} for the case $p=0$ and
in the right figure of Figure~\ref{figureNon} for the case $p>0$.
The region filled by the paths,
for a fixed time $t\in [0,1]$, is exactly the interval $[\beta(1),\gamma(1)]$.
We can obtain $\beta(1)$ and $\gamma(1)$ by computing the zeros of the discriminant
of the polynomial $z^2A_1(z)-z^2x$ with respect to $z$, which is
the  following algebraic equation of degree three in $x$:
\begin{multline*}
    4ax^3 - \big[8a^2(1-t)^2 + 4at(1-t)(2p+5) -t^2(p+1)^2 \big] x^2 \\
    + (1-t) \big[4a^3(1-t)^3 + 4a^2 t (1-t)^2 (2p-3) + 2at^2 (1-t)(p^2+p+6)
    -2t^3(p+2)(p+1)^2\big]  x \\
    + p^2 t^2 (1-t)^2 \big[ a^2 (1-t)^2 + 2a t(1-t)(p-1)+ t^2(p+1)^2 \big]
    =0.
\end{multline*}
If $p=0$ then the algebraic equation reduces to
\begin{equation}
x(4ax^2-x(8a^2(1-t)^2+20at(1-t)-t^2)+4(1-t)(a(1-t)-t)^3)=0.
\end{equation}
We can compute explicit expressions for the solutions in this case
\begin{align*}
x_1(t)&=0,\\
x_2(t)&=\frac{1}{8a}\big(8a^2(1-t)^2-t(t-20a(1-t))-\sqrt{t(t+8a(1-t))^3}\big),\\
x_3(t)&=\frac{1}{8a}\big(8a^2(1-t)^2-t(t+20a(1-t))+\sqrt{t(t+8a(1-t))^3}\big).
\end{align*}
\end{rem}
\begin{figure}
\centering
\begin{overpic}[width=10cm,height=5.72cm]{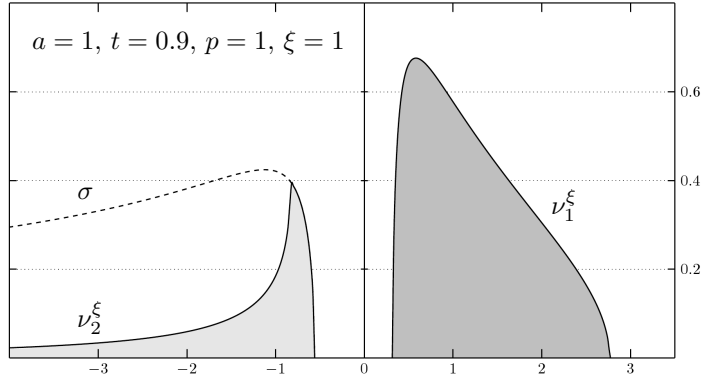}
    \put(14,27){$\sigma$}
    \put(77,25){$\nu_1^\xi$}
     \put(14,10){$\nu_2^\xi$}
     \put(8,47){$a=1$, $t=0.9$, $p=1$,
$\xi=1$}
\end{overpic}
\caption{Graphs of the densities of $\sigma$ (dashed) and
$\nu_2^\xi$ on the negative real line and the density of
$\nu_1^\xi$ on the positive real line for $p > 0$.}\label{plot:densities_sigma_nu_p}
\end{figure}

\begin{rem} If we let $a\rightarrow 0$ and $s=1$, then the symbol \eqref{family_of_symbols2} becomes
$$\frac{(1+zt(1-t))^2}{z^2}+\frac{p t(1-t)(1+zt(1-t))}{z}.$$
The solution $z_3(x,1)$ tends to zero as $a\rightarrow 0$. On the
other hand, the solutions $z_1(x,1)$ and $z_2(x,1)$ have limits
(with appropriate choice of $\pm$-sign)
\begin{align*}
z_1(x)&=-\frac{1}{2}t(1-t)(p+2)+\frac{x}{2} +\frac12 \left((x-\rho_1(t))(x-\rho_2(t))\right)^{1/2},\\
z_2(x)&=-\frac{1}{2}t(1-t)(p+2)+\frac{x}{2} -\frac12
\left((x-\rho_1(t))(x-\rho_2(t))\right)^{1/2},
\end{align*}
as $a \to 0$, where
$$\rho_1(t)=t(1-t)(p+2-2\sqrt{p+1}),
\qquad \rho_2(t)=t(1-t)(p+2+2\sqrt{p+1}).$$
One can show that $\nu_1$ is the
Marchenko-Pastur distribution, see e.g.\ \cite{HP}, with density
$$\frac{d\nu_1(x)}{dx}=\frac{\sqrt{(\rho_2(t)-x)(x-\rho_1(t))}}{2\pi t(1-t)x},
\quad x\in[\rho_1(t),\rho_2(t)].$$
Figure~\ref{non_intersecting_March_Pastur} shows simulations of
$50$ non-intersecting squared Bessel paths for the case $a=0$ and
the boundaries $\rho_1(t)$ and $\rho_2(t)$ of the region filled by
the paths as $n\rightarrow \infty$.
\end{rem}

\begin{figure}
\centering
\begin{overpic}[width=5.9cm,height=4.72cm]{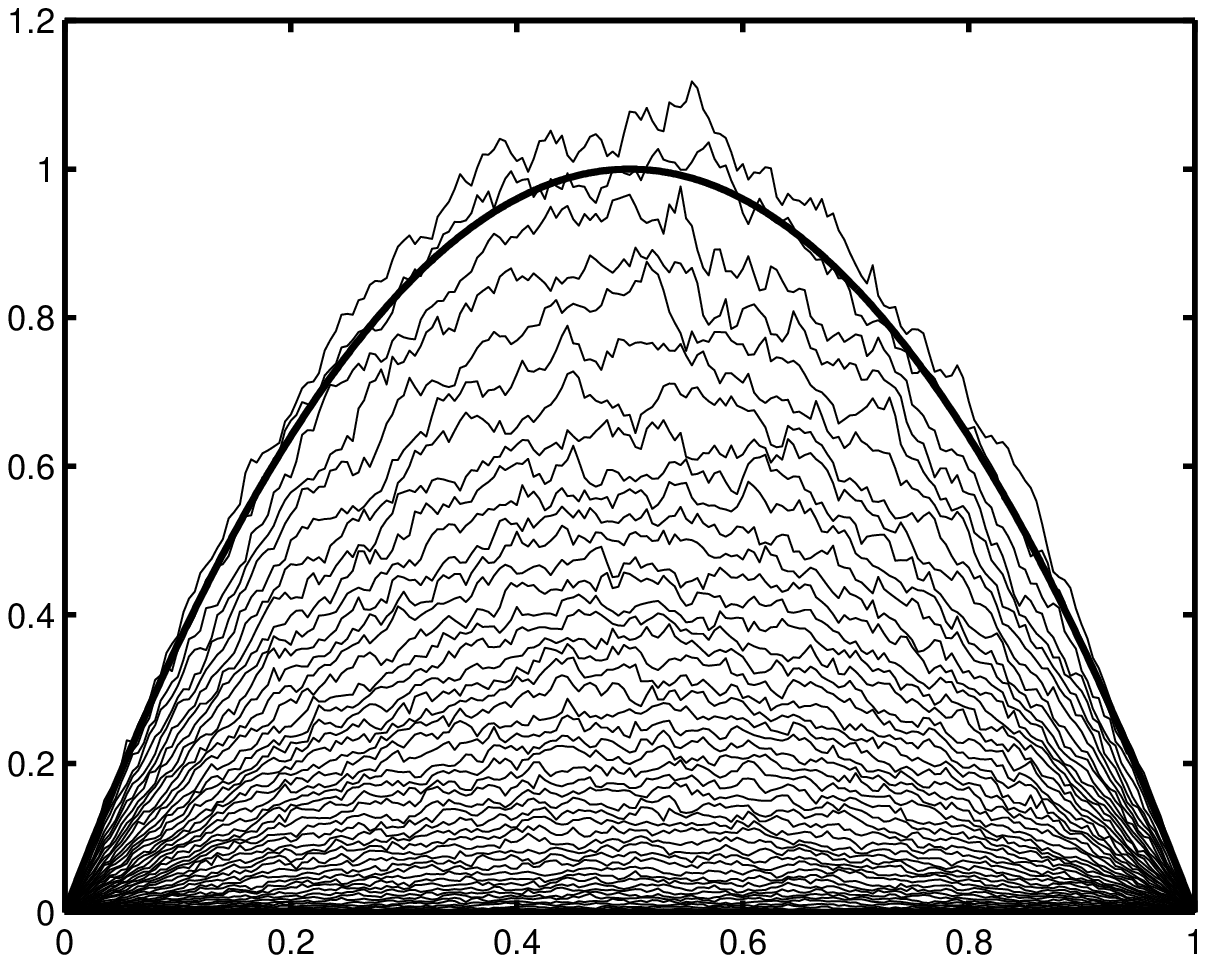}
\put(58,67.5){$a=0$, $p = 0$}
\put(50,2){$t$}
\put(5,40){$x$}
\end{overpic}
\hspace{.5cm}
\begin{overpic}[width=5.9cm,height=4.72cm]{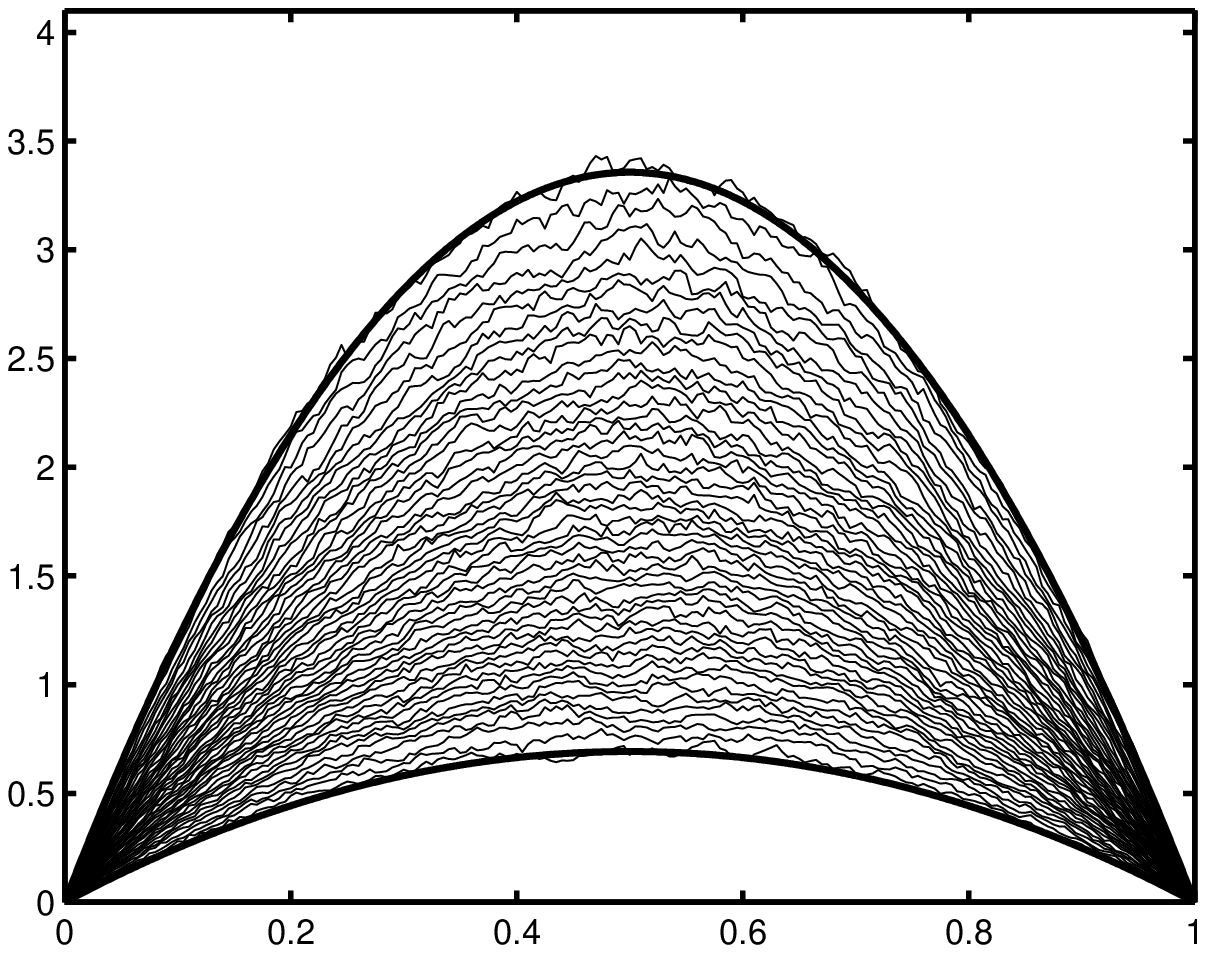}
\put(57,67.5){$a=0$, $p = 5$}
\put(50,2){$t$}
\put(5,40){$x$}
\end{overpic}
\caption{
\label{non_intersecting_March_Pastur}
Numerical simulation of $50$ non-intersecting paths for $a=0$
and $p=0$ (left), $p>0$ (right).}
\end{figure}

The rest of the paper is devoted to the proofs of the theorems and
propositions stated above. Theorem~\ref{Theorem_zero_distribution_general}
is proved in Section~\ref{Section_zero_distribution}, Proposition~\ref{Interlacing_Bk}
in Section~\ref{interlacing}, Propositions~\ref{Proposition_Gamma_1_2}
and \ref{Proposition_Gamma_1_2_alpha}
in Section~\ref{sec:proof_propositions},
Theorem~\ref{proposition_V(x)_nu_1} in Section~\ref{sec:proof_theorem_Var_conditions},
and finally Theorem~\ref{thm:external_field} is proved in
Section~\ref{sec:externalfield}.

\section{Proof of Theorem~\ref{Theorem_zero_distribution_general}}
\label{Section_zero_distribution}

Before proving Theorem~\ref{Theorem_zero_distribution_general} we will state a result
concerning ratio asymptotics of polynomials satisfying a recurrence relation \eqref{recurrence_m}
with varying recurrence coefficients.

The following lemma will be used in the proofs of  Theorem~\ref{Theorem_zero_distribution_general}
and Lemma~\ref{theorem_before_zeros_polynomials}.
\begin{lem}
\label{lemma_polynomials_interlacing}
Suppose that the zeros of the monic polynomials $P_k$ and $P_{k+1}$, with degrees $k$ and $k+1$,
respectively, are simple and real, lie in an interval $[-R,R]$ for some $R>0$, and are interlacing.
Then we have
\begin{align}
    \label{inequality1}
    & \left| \frac{P_{k}(z)}{P_{k+1}(z)}\right|\leq \frac{1}{\dist(z,[-R,R])} \qquad \text{for } z\in \mathbb{C}\setminus [-R,R], \\
    \label{inequality2}
    & \left| \frac{P_{k}(z)}{P_{k+1}(z)}\right|\geq \frac{1}{2|z|} \qquad \text{for }  |z|>R, \\
    \label{inequality3}
    & \left| \left( \frac{P_{k}(z)}{P_{k+1}(z)} \right)'\right| \leq \frac{1}{\dist(z,[-R,R])^2} \qquad
        \text{for } z\in \mathbb{C}\setminus [-R,R].
\end{align}
\end{lem}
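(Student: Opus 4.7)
My plan is to reduce all three estimates to a single integral representation of the monic ratio $P_k/P_{k+1}$. First I would write down the partial fraction decomposition
\[ \frac{P_k(z)}{P_{k+1}(z)} = \sum_{j=1}^{k+1} \frac{A_j}{z - y_j}, \qquad A_j = \frac{P_k(y_j)}{P'_{k+1}(y_j)}, \]
where $y_1 < \cdots < y_{k+1}$ are the zeros of $P_{k+1}$. A routine sign count based on the interlacing hypothesis shows that $P_k(y_j)$ and $P'_{k+1}(y_j)$ both have sign $(-1)^{k+1-j}$, so every residue $A_j$ is strictly positive; comparing leading behavior at $\infty$ (both polynomials are monic with degrees differing by one) gives $\sum_j A_j = 1$. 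Setting $\mu := \sum_j A_j \delta_{y_j}$, this yields the clean representation
\[ \frac{P_k(z)}{P_{k+1}(z)} = \int \frac{d\mu(y)}{z - y}, \]
where $\mu$ is a probability measure supported in $[-R,R]$.

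With this representation in hand, two of the estimates are immediate. For \eqref{inequality1}, the triangle inequality gives $|P_k/P_{k+1}| \leq \int d\mu(y)/|z-y| \leq 1/\dist(z,[-R,R])$, using that $|z-y| \geq \dist(z,[-R,R])$ for every $y \in \supp \mu \subset [-R,R]$. For \eqref{inequality3}, I differentiate under the integral to get $(P_k/P_{k+1})' = -\int d\mu(y)/(z-y)^2$ and apply the same bound with a squared denominator.

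The interesting estimate is \eqref{inequality2}, where I would use a M\"obius-transformation trick. Substituting $u = y/z$ rewrites $z/(z-y)$ as $1/(1-u)$, and a one-line manipulation verifies the elementary inequality
\[ \mathrm{Re}\!\left(\frac{1}{1-u}\right) = \frac{1 - \mathrm{Re}(u)}{|1-u|^2} \geq \frac{1}{2} \qquad \text{for all } |u| \leq 1, \]
which reduces to $|u|^2 \leq 1$ upon clearing denominators. When $|z| > R$, all the points $u = y/z$ with $y \in [-R,R]$ lie strictly inside the unit disk, so integrating this pointwise inequality against $\mu$ gives $\mathrm{Re}(zP_k(z)/P_{k+1}(z)) \geq 1/2$ and hence $|zP_k(z)/P_{k+1}(z)| \geq 1/2$, which is \eqref{inequality2}. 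The only step that requires any care is the interlacing-based sign count for the residues $A_j$; everything else is the triangle inequality plus the standard fact that $u \mapsto 1/(1-u)$ sends the unit disk into the half-plane $\{\mathrm{Re} \geq 1/2\}$.
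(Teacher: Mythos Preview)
Your argument is correct. The partial-fraction/probability-measure representation of $P_k/P_{k+1}$ is exactly the right tool here, and each of the three estimates follows from it as you describe: \eqref{inequality1} and \eqref{inequality3} by the triangle inequality applied to $\int (z-y)^{-1}\,d\mu(y)$ and its derivative, and \eqref{inequality2} by the observation that $u\mapsto 1/(1-u)$ maps the open unit disk into $\{\mathrm{Re}\,w\ge 1/2\}$, integrated against the probability measure $\mu$.

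The paper does not actually give a proof of this lemma: it simply cites \cite[Lemma~2.2]{KVA} for \eqref{inequality1} and \eqref{inequality2} and says that \eqref{inequality3} is proved similarly to \eqref{inequality1}. Your write-up is therefore more detailed than what appears in the paper, but it is precisely the standard argument that lies behind that citation; in particular the sign analysis of the residues $A_j$ (which gives $A_j>0$ and $\sum_j A_j=1$) is the key ingredient, and you have it right. One cosmetic remark: your displayed inequality $\mathrm{Re}(1/(1-u))\ge 1/2$ is stated for $|u|\le 1$, but of course one must exclude $u=1$; since $|z|>R$ forces $|y/z|<1$ this is harmless in the application.
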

\begin{proof}
Inequalities \eqref{inequality1} and \eqref{inequality2} can be found in \cite[Lemma 2.2]{KVA}.
The proof of \eqref{inequality3} is similar to that of \eqref{inequality1}.
\end{proof}

We consider the doubly indexed sequences of polynomials $\{P_{k,n}\}$ generated
by the $m$-term recurrence relation \eqref{recurrence_m} and
we assume that the recurrence coefficients have scaling limits
\[ \lim_{k/n\to s} b_{k,n}^{(j)}=b^{(j)}(s). \]

\begin{lem} \label{theorem_before_zeros_polynomials}
Under the assumptions of Theorem~\ref{Theorem_zero_distribution_general}
we have that for each $s > 0$, there exists $R > 0$ so that
all zeros of $P_{k,n}$ belong to $[-R,R]$ whenever $k\leq (s+1)n$.
Moreover,
\begin{equation}
\label{asymptotic_dist_of_zeros}
\lim_{k/n\to s} \frac{P_{k+1,n}(x)}{P_{k,n}(x)} = z_1(x,s),
\end{equation}
uniformly on compact subsets of $\mathbb{C} \setminus [-R,R]$.
\end{lem}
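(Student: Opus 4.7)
My proposal is to prove the two assertions of Lemma~\ref{theorem_before_zeros_polynomials} separately: the uniform localization of zeros in a fixed interval $[-R,R]$, and then the ratio asymptotics on $\mathbb{C}\setminus[-R,R]$.

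For the boundedness of zeros, I identify $P_{k,n}$ with the characteristic polynomial of the $k\times k$ banded Hessenberg matrix $M_{k,n}$ naturally associated with the recurrence \eqref{recurrence_m}: entries $1$ on the subdiagonal, $b_{j,n}^{(0)}$ on the diagonal, and $b_{j,n}^{(i)}$ in position $(j-i,j)$ for $i=1,\ldots,m-2$. The pointwise convergence \eqref{Convergence_coefficients_m} together with the continuity of each $b^{(j)}$ on the compact interval $[0,s+1]$ gives, via a standard covering argument, a uniform bound $|b_{k,n}^{(i)}|\leq C$ valid for all $k\leq (s+1)n$ and all $n$. Consequently the operator norm of $M_{k,n}$ is at most $R:=1+(m-1)C$, and its eigenvalues—which by assumption (a) are real and coincide with the zeros of $P_{k,n}$—lie in $[-R,R]$. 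The same coefficient bound, applied to the Toeplitz matrices $T_n(A_s)$, also yields $\Gamma_1(s)\subset[-R,R]$, so that $z_1(\cdot,s)$ is holomorphic on $\mathbb{C}\setminus[-R,R]$.

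For the ratio asymptotics, I work with $Q_{k,n}(x):=P_{k,n}(x)/P_{k+1,n}(x)$. Inequality \eqref{inequality1} gives $|Q_{k,n}(x)| \leq 1/\dist(x,[-R,R])$ for $x\in\mathbb{C}\setminus[-R,R]$, so $\{Q_{k,n}\}$ is locally uniformly bounded and therefore a normal family on $\mathbb{C}\setminus[-R,R]$ by Montel's theorem. Dividing the recurrence \eqref{recurrence_m} by $P_{k+1,n}(x)$ and using the telescoping identity $P_{k-i,n}/P_{k+1,n}=\prod_{j=0}^{i}Q_{k-j,n}$ produces
\[
    x\, Q_{k,n}(x) = 1 + \sum_{i=0}^{m-2} b_{k,n}^{(i)} \prod_{j=0}^{i} Q_{k-j,n}(x).
\]
If $Q_{k_\ell,n_\ell}\to Q$ uniformly on compacta along a subsequence with $k_\ell/n_\ell\to s$, then $Q_{k_\ell-j,n_\ell}\to Q$ for each fixed $j$, and by \eqref{Convergence_coefficients_m} the limiting identity is $x\, Q(x) = 1 + \sum_{i=0}^{m-2} b^{(i)}(s)\, Q(x)^{i+1}$. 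In particular $Q(x_0)=0$ would force $0=1$, so $Q$ is nowhere zero on $\mathbb{C}\setminus[-R,R]$, and dividing by $Q(x)$ gives $A_s(1/Q(x))=x$; hence $R(x):=1/Q(x)$ is a holomorphic branch of the algebraic equation $A_s(z)=x$ on $\mathbb{C}\setminus[-R,R]$.

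To identify the branch as $z_1(\cdot,s)$, I note that the same bound forces $Q(x)\to 0$ and therefore $R(x)\to\infty$ as $|x|\to\infty$. Among the solutions $z_1,\ldots,z_{m-1}$ of $A_s(z)=x$, only $z_1$ is unbounded at infinity (the remaining $m-2$ tend to $0$, since $A_s$ has a pole of order $m-2$ at $z=0$), so every subsequential limit of $\{Q_{k,n}\}$ equals $1/z_1(\cdot,s)$; normality upgrades this to $Q_{k,n}\to 1/z_1(\cdot,s)$ locally uniformly, which is \eqref{asymptotic_dist_of_zeros}. In my view the main obstacle is precisely this last step—passing from subsequential limits to a single limit that corresponds to a globally consistent holomorphic branch on the connected domain $\mathbb{C}\setminus[-R,R]$—and it is resolved by the asymptotic behaviour at infinity together with the localization $\Gamma_1(s)\subset[-R,R]$ established above.
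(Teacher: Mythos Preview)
Your localization of the zeros in $[-R,R]$ via the Hessenberg matrix and the uniform coefficient bound is correct and essentially identical to the paper's argument; your direct argument that $\Gamma_1(s)\subset[-R,R]$ via the Toeplitz matrices is a nice shortcut.

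The ratio-asymptotics argument, however, has a genuine gap. You assert that if $Q_{k_\ell,n_\ell}\to Q$ along a subsequence with $k_\ell/n_\ell\to s$, then also $Q_{k_\ell-j,n_\ell}\to Q$ for each fixed $j$. This is precisely the statement one is trying to prove: that the limit depends only on the limiting ratio $k/n$. A priori, passing to a further subsequence only gives $Q_{k_\ell-j,n_\ell}\to Q^{(j)}$ for some possibly different functions $Q^{(0)},\ldots,Q^{(m-2)}$, and the limiting recurrence then reads
\[
    x\,Q^{(0)}(x)=1+\sum_{i=0}^{m-2} b^{(i)}(s)\prod_{j=0}^{i}Q^{(j)}(x),
\]
which does \emph{not} say that $1/Q^{(0)}$ solves $A_s(z)=x$. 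Without knowing in advance that all the $Q^{(j)}$ coincide, you cannot reach the algebraic equation, and the identification with $z_1$ never gets started.

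The paper confronts exactly this difficulty with an inductive bootstrap: one shows, by induction on $l$, that \emph{every} subsequential limit $f$ of $P_{k+1,n}/P_{k,n}$ (and hence every $1/Q^{(j)}$) satisfies $f(x)=z_1(x,s)+\mathcal O(x^{-l})$ as $x\to\infty$. The base case $l=0$ is immediate from $P_{k+1,n}/P_{k,n}=x+\mathcal O(1)$; in the inductive step one feeds the $\mathcal O(x^{-l})$ information on all the shifted limits $f^{(j)}$ into the limiting recurrence to upgrade the estimate on $f^{(0)}$ to $\mathcal O(x^{-l-2})$. Only after this induction does one know that every subsequential limit agrees with $z_1(x,s)$ near infinity, and analytic continuation on the connected domain $\mathbb C\setminus[-R,R]$ then finishes the job. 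Your final paragraph correctly identifies the branch selection at infinity as a key point, but the argument leading up to it needs this extra layer.
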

\begin{proof}
Fix $s >0$.  The convergence \eqref{Convergence_coefficients_m}
imply that the recurrence coefficients are uniformly bounded if $k/n$ is restricted
to compact subsets of $[0,\infty)$. So, the number $R$ defined by
\begin{equation} \label{definitionR}
    R:= \sup \{1+|b^{(0)}_{k,n}|+|b^{(1)}_{k,n}|+\cdots+|b^{(m-2)}_{k,n}| \mid k\leq (s+1) n\}
    \end{equation}
is finite.

By the recurrence \eqref{recurrence_m} we have $P_{k,n}(x)=\det(zI_k-M_{k,n})$
where $M_{k,n}$ is the matrix
\begin{equation}
\label{Matrix_M_k}
M_{k,n}=\begin{pmatrix}
          b^{(0)}_{0,n}     &  1            & 0      &                    &           &          &  \\
          b^{(1)}_{1,n}     & b^{(0)}_{1,n} & 1      & 0                  &           &          &    \\
          \vdots            &               & \ddots & 1                  & \ddots    &          &  \\
          b^{(m-2)}_{m-2,n} &               &        & \ddots             & \ddots    & \ddots   &  \\
           0                & \ddots        &        &                    &  \ddots   & \ddots   & 0 \\
                            & \ddots        & \ddots &                    &           & \ddots   & 1  \\
                            &               &  0     & b^{(m-2)}_{k-1,n}  &  \ldots   & \ldots   & b^{(0)}_{k-1,n}
        \end{pmatrix}.
\end{equation}
The zeros of $P_{k,n}$ are equal to the eigenvalues of $M_{k,n}$, and their absolute values
are bounded by the maximum absolute row sum of $M_{k,n}$.
Therefore, by the definition \eqref{definitionR}, if $k\leq (s+1)n$ then the zeros of
$P_{k,n}$ lie in the interval $[-R,R]$.

We consider the family of functions
\begin{equation} \label{family}
    \mathcal H = \left\{ \frac{P_{k+1,n}}{P_{k,n}} \mid \, k,n\in \mathbb{N},\, k+1 \leq (s+1)n\right\}.
\end{equation}
Because of the assumption (a) in Theorem~\ref{Theorem_zero_distribution_general}
we can apply Lemma~\ref{lemma_polynomials_interlacing} to $P_{k,n}$ and $P_{k+1,n}$.
It follows from \eqref{inequality1}, \eqref{inequality2} and \eqref{family} that
the family $\mathcal H$ is a normal family (in the sense of Montel) on
$\mathbb C \setminus [-R,R]$.

Using induction on $l$, we will show the following.
\begin{description}
\item[Claim]
For each $l \geq 0$, the following holds.
If $\{ k_i \}_i$, $\{ n_i \}_i$ are sequences of non-negative integers with
 $k_i,n_i \to \infty$, $k_i/n_i \to s$ as $i\to \infty$, so that
\begin{equation} \label{deffx}
    f(x) :=  \lim_{i \to \infty} \frac{P_{k_i+1,n_i}(x)}{P_{k_i,n_i}(x)}
    \end{equation}
exists for $|x| > R$, then
\begin{equation} \label{inductionstatement}
    f(x) = z_1(x,s) + \mathcal O(x^{-l})
    \end{equation}
as $x \to \infty$.
\end{description}

We have $z_1(x,s) = x + \mathcal O(1)$, as $x \to \infty$, and so it is clear that
\eqref{inductionstatement} holds for $l = 0$.

Now assume that the claim holds for $l \geq 0$. Let $\{ k_i \}$ and $\{ n_i\}$
be as in the claim. Since $k_i/n_i \to s$ as $i \to \infty$, we may
assume that $k_i \leq (s+1) n_i$ for every $i$.  For $j = 0, \ldots, m-2$,
we then have that
\[ \frac{P_{k_i + 1- j,n_i}}{P_{k_i-j,n_i}} \]
belongs to the family $\mathcal H$. Since it is a normal family,
we may assume, by passing to a subsequence if necessary, that
\begin{equation} \label{definitionfj}
    f^{(j)}(x) = \lim_{i \to \infty} \frac{P_{k_i+1- j,n_i}(x)}{P_{k_i-j,n_i}(x)}
    \end{equation}
exists for $x \in \overline{\mathbb C} \setminus [-R,R]$ and $j = 0, \ldots, m-2$.

Now we divide the recurrence \eqref{recurrence_m} by $P_{k,n}$,
and replace $k$ and $n$ by $k_i$ and $n_i$, to obtain
for each $j$,
\[ x = \frac{P_{k_i + 1,n_i}(x)}{P_{k_i,n_i}(x)}
    + b_{k_i,n_i}^{(0)} +
        \sum_{j=1}^{m-2} b_{k_i,n_i}^{(j)} \frac{P_{k_i-j,n_i}(x)}{P_{k_i,n_i}(x)}.
      \]
We let $i \to \infty$, where we note that by \eqref{Convergence_coefficients_m}
\[ b_{k_i,n_i}^{(j)} \to b^{(j)}(s) \]
and by \eqref{definitionfj}
\[ \frac{P_{k_i-j,n_i}(x)}{P_{k_i,n_i}(x)} \to \left[f^{(1)}(x) \cdots f^{(j)}(x) \right]^{-1} \]
for $x \in \overline{\mathbb C} \setminus [-R,R]$ and $j = 1, \ldots, m-2$.
Thus
\[ x = f(x) + b^{(0)}(s) + \sum_{j=1}^{m-2} b^{(j)}(s) \left[f^{(1)}(x) \cdots f^{(j)}(x) \right]^{-1}. \]
for all $x\in \mathbb{C}\setminus [-R,R]$.

Now by the induction hypothesis we have for $j = 0, \ldots, m-2$,
\[ f^{(j)}(x) = z_1(x,s)  + \mathcal O(x^{-l}) = z_1(x,s) \left(1 + \mathcal O(x^{-l-1})\right)
    \qquad \text{as } x \to \infty. \]
and so
\begin{align*}
    f(x) & = x - b^{(0)}(s) - \sum_{j=1}^{m-2} b^{(j)}(s) \left[f^{(1)}(x) \cdots f^{(j)}(x) \right]^{-1} \\
    & = x - b^{(0)}(s) - \sum_{j=1}^{m-2} b^{(j)}(s) z_1(x,s)^{-j} \left(1 + \mathcal O(x^{-l-1})\right) \\
    & = x - b^{(0)}(s) - \sum_{j=1}^{m-2} b^{(j)}(s) z_1(x,s)^{-j} + \mathcal O(x^{-l-2}).
    \end{align*}
Since $z_1(x,s)$ is a solution of $A_s(z) = x$, we have
\[ x - b^{(0)}(s) - \sum_{j=1}^{m-2} b^{(j)}(s) z_1(x,s)^{-j} = z_1(x,s) \]
and so we obtain
\[ f(x)= z_1(x,s) + \mathcal{O}(x^{-l-2}) \]
as $x \to \infty$, which proves \eqref{inductionstatement} for $l+2$.

The claim now proved, we finally show how the lemma follows from the claim.
First note that $f$ is analytic in $\mathbb C \setminus [-R,R]$
and $x \mapsto z_1(x,s)$ is defined, analytic and non-zero in $\mathbb C \setminus \Gamma_1(s)$.
Thus, if \eqref{inductionstatement} holds for every $l$, then
clearly $f(x) = z_1(x,s)$ for $x$ in a neighborhood of $\infty$
in the $x$-plane, and by analyticity the equality extends to
$\mathbb C \setminus ([-R,R] \cup \Gamma_1(s))$.
Next, recall the assumption (b) in
Theorem~\ref{Theorem_zero_distribution_general}, which says that
$\Gamma_1(s) \subset \mathbb R$. Then it easily follows by its
definition that $z_1(x,s)$ cannot possibly have an analytic
continuation from $\mathbb C \setminus \Gamma_1(s)$ to a larger set.
Thus $\Gamma_1(s) \subset [-R,R]$ and it follows that
\begin{equation}
    \lim_{i \to \infty} \frac{P_{k_i+1,n_i}(x)}{P_{k_i,n_i}(x)}
    = z_1(x,s), \qquad x \in \mathbb C \setminus [-R,R],
\end{equation}
for all sequences $\{k_i\}$, $\{n_i\}$ as in the statement of \eqref{family}.

Then by a standard normal families argument (recall that the family $\mathcal H$
is a normal family) \eqref{asymptotic_dist_of_zeros} follows.

\end{proof}

\paragraph{Proof of Theorem~\ref{Theorem_zero_distribution_general}.} Let
$\xi > 0$, and take $0 \leq s \leq 1$. From Lemma
\ref{theorem_before_zeros_polynomials}, there exists $R>0$ such
that all zeros of $P_{k,n}$ belong to $[-R,R]$. For $k \in \mathbb
N$, we use $\lfloor sk\rfloor$ to denote the greatest integer less
than or equal to $sk$. We observe that
\begin{align} \nonumber
    \frac{1}{k} \frac{P'_{k,n}(x)}{P_{k,n}(x)} &
    =\frac1k \sum_{j=0}^{k-1}\left( \frac{P'_{j+1,n}(x)}{P_{j+1,n}(x)} - \frac{P'_{j,n}(x)}{P_{j,n}(x)}  \right) \\
    & = \int_0^1 \left( \frac{P'_{\lfloor sk\rfloor+1,n}(x)}{P_{\lfloor sk\rfloor+1 ,n}(x)}
    - \frac{P'_{\lfloor sk\rfloor,n}(x)}{P_{\lfloor sk\rfloor,n}(x)}  \right) ds.
    \label{riemman_sum}
\end{align}
By taking the logarithmic derivative of \eqref{asymptotic_dist_of_zeros} and
using \eqref{mu_1_identity_1} we obtain
\begin{equation}
\label{eq:log_derivative}
 \lim_{k/n\to \xi} \left( \frac{P'_{\lfloor sk\rfloor+1,n}(x)}{P_{\lfloor sk\rfloor+1 ,n}(x)}
    - \frac{P'_{\lfloor sk\rfloor,n}(x)}{P_{\lfloor sk\rfloor,n}(x)}  \right) =
    \frac{z'_1(x,s\xi)}{z_1(x,s\xi)}= \int \frac{d \mu^{s\xi}_1(y)}{x-y},
\end{equation}
uniformly on compact subsets of $\mathbb{C}\setminus[-R,R]$.
From \eqref{inequality2} and \eqref{inequality3} in Lemma~\ref{lemma_polynomials_interlacing}
we obtain
\begin{align}
\left| \frac{P'_{\lfloor sk\rfloor+1,n}(x)}{P_{\lfloor sk\rfloor+1 ,n}(x)} -
    \frac{P'_{\lfloor sk\rfloor,n}(x)}{P_{\lfloor sk\rfloor,n}(x)} \right|
     &=\left| \left(\frac{P_{\lfloor sk\rfloor,n}(x)}{P_{\lfloor sk\rfloor+1,n}(x)}\right)' \right|
    \left|\frac{P_{\lfloor sk\rfloor+1,n}(x)}{P_{\lfloor sk\rfloor,n}(x)}\right|\\
     &\leq \frac{2|z|}{\dist(z,[-R,R])^2} \nonumber.
\end{align}
for $|z|>R$. Therefore we may apply Lebesgue's dominated convergence theorem
and it follows by \eqref{riemman_sum} and \eqref{eq:log_derivative}
$$ \lim_{k/n \to \xi} \frac1k
\frac{P'_{k,n}(x)}{P_{k,n}(x)}=\int_0^1 \int \frac{d \mu^{s\xi}_1(y)}{x-y}ds =\frac{1}{\xi}\int_0^\xi \int \frac{d \mu^{s}_1(y)}{x-y} ds.$$
Therefore we have that
$$\lim_{k/n \to \xi}\frac{1}{k}\frac{P'_{k,n}(x)}{P_{k,n}(x)}
=\int \frac{1}{\xi}\int_0^\xi\frac{d \mu^{s}_1(y)}{x-y} ds,$$
for all $x\in\mathbb{C}\setminus [-R,R]$. This gives, by a standard argument,
see \cite{ST}, that the polynomials $P_{k,n}$ have
$$\nu^\xi_1=\frac{1}{\xi}\int_0^\xi \mu^{s}_1(\lambda) ds,$$
as limiting zero distribution as $k/n\to \xi$. \qed

\section{Proof of Proposition~\ref{Interlacing_Bk}}
\label{interlacing}

In \cite{CVA1} it is proved that the weights $(w_1,w_2)$ from
\eqref{Besselweights} are such that there  exist discrete measures
$\sigma_1$ and $\sigma_2$ on $(-\infty,0]$ such that
\begin{align*}
\frac{w_2(x)}{w_1(x)}=x\int_{-\infty}^0 \frac{d\sigma_1(t)}{x-t},\quad
\frac{w_1(x)}{w_2(x)}=\int_{-\infty}^0 \frac{d\sigma_2(t)}{x-t}, \quad x>0.
\end{align*}
From this it is shown in \cite[Theorem 4]{CVA1} that the weights
$(w_1,w_2)$ form an AT-system on $[0,\infty)$, which means that
for any $n_1, n_2 \in \mathbb N$, the set of real valued functions
$$\mathcal{F}_{n_1,n_2} = \{ w_1(x), xw_1(x), \ldots, x^{n_1} w_1(x),w_2(x),xw_2(x),\ldots,x^{n_2} w_2(x)\},$$
is a Chebyshev system on $(0,\infty)$ i.e., every linear
combination $\sum_{k=1}^{n_1+n_2+2} a_k\varphi_k$, with
$\varphi_k\in \mathcal{F}_{n_1,n_2}$, $\varphi_k\neq \varphi_{k'}$
if $k\neq k'$, and $(a_1, \ldots ,a_{n+m+2})\neq (0,\dots,0)$, has
at most $n_1+n_2+1$ zeros in $(0,\infty)$.

A consequence of this result is that for every $n_1, n_2 \in
\mathbb N$ there is a unique monic polynomial $B_{n_1,n_2}$ of
degree $n_1+n_2$ satisfying the orthogonality conditions
\begin{equation} \label{eq:orthogonalityBn1n2}
    \int_0^{\infty} B_{n_1,n_2}(x) x^k w_j(x) dx = 0,
\qquad
    \text{for } k=0, 1, \ldots, n_j-1, \qquad j =1,2.
\end{equation}
The polynomial $B_{n_1,n_2}$ has exactly $n_1+n_2$ simple zeros in
$(0,\infty)$, see e.g.\ \cite{VAC}. Comparing
\eqref{eq:orthogonalityBn1n2}  with \eqref{Besselweights} we see
that $B_n = B_{n_1,n_2}$ with $n_1 = n_2 = n/2$ if $n$ is even,
and $n_1 = (n+1)/2$, $n_2 = (n-1)/2$ if $n$ is odd.

The interlacing of zeros now follows from Lemma 2.3 and Remark 2.1
of \cite{AKLR}. Indeed there it is shown that for any $n_1, n_2$
the zeros of $B_{n_1,n_2}$ are interlaced with those of
$B_{n_1+1,n_2}$ and with those of $B_{n_1,n_2+1}$.

For convenience of the reader, we give the proof of interlacing
following \cite{AKLR} (see also \cite[Lemma~5 and Corollary~1]{FIL}
for a different approach). We show that the zeros of
$B_{n_1,n_2}$ interlace with those of $B_{n_1+1,n_2}$, the proof
for $B_{n_1,n_2+1}$ being similar.

Let us consider a polynomial $P = aB_{n_1,n_2}+ bB_{n_1+1,n_2}$
with $(a,b)\neq(0,0)$. Let us assume that $P$ has a double real
zero at $\zeta \in \mathbb R$. Then
\begin{equation}
\label{polynomial_P_R} P(x)=aB_{n_1,n_2}(x) +bB_{n_1+1,n_2}(x)=
(x-\zeta)^2 R(x), \qquad \deg R\leq n_1+n_2 -1.
\end{equation}
From the orthogonality conditions \eqref{eq:orthogonalityBn1n2} it
follows that $R$ satisfies
\begin{equation}
\label{eq:Rorthogonality}
\begin{aligned}
    \int_0^\infty R(x) x^k (x-\zeta)^2 w_j(x)dx
     & = \int_0^{\infty} \left(aB_{n_1,n_2}(x) +bB_{n_1+1,n_2}(x)
     \right) w_j(x) dx
     \\
    & = 0 \qquad \text{ for } k= 0, \ldots, n_j-1, \quad j=1,2.
\end{aligned}
\end{equation}
Thus $R$ has multiple orthogonality conditions with respect to the
weights $(\widetilde{w}_1, \widetilde{w}_2)$ where
 \[ \widetilde w_1(x) = (x-\zeta)^2 w_1(x), \qquad \text{and} \qquad
    \widetilde w_2(x) = (x-\zeta)^2 w_2(x). \]
By the same reasoning as in \cite{CVA1} it follows that
$(\widetilde{w}_1, \widetilde{w}_2)$ is also an AT system on
$[0,\infty)$.  It follows that any monic polynomial $R$ that
satisfies \eqref{eq:Rorthogonality} should have degree $\geq
n_1+n_2$, and this is a contradiction.

Therefore the polynomial $P$ has only simple zeros in $\mathbb R$.
This means that for every fixed $x\in \mathbb{R}$, the linear
system
$$\begin{pmatrix} B_{n_1,n_2}(x) & B_{n_1+1,n_2}(x) \\ B'_{n_1,n_2}(x) & B'_{n_1+1,n_2}(x)
   \\ \end{pmatrix} \begin{pmatrix} a \\ b \\ \end{pmatrix}
 =\begin{pmatrix} 0 \\ 0 \\ \end{pmatrix} $$
has only the trivial solution $a=b=0$. Then the determinant is
non-zero
$$ B_{n_1,n_2}(x)B'_{n_1+1,n_2}(x) - B_{n_1+1,n_2}(x) B'_{n_1,n_2}(x) \neq 0,
    \qquad x \in \mathbb R, $$
which readily implies that $B_{n_1,n_2}$ has opposite signs at
consecutive zeros of $B_{n_1+1,n_2}$. It follows that between two
consecutive zeros of $B_{n_1+1,n_2}$ there is at least one zero of
$B_{n_1,n_2}$, and this proves  the interlacing of zeros. \qed

\begin{figure}
\centering
\begin{overpic}[width=11cm,height=6cm]{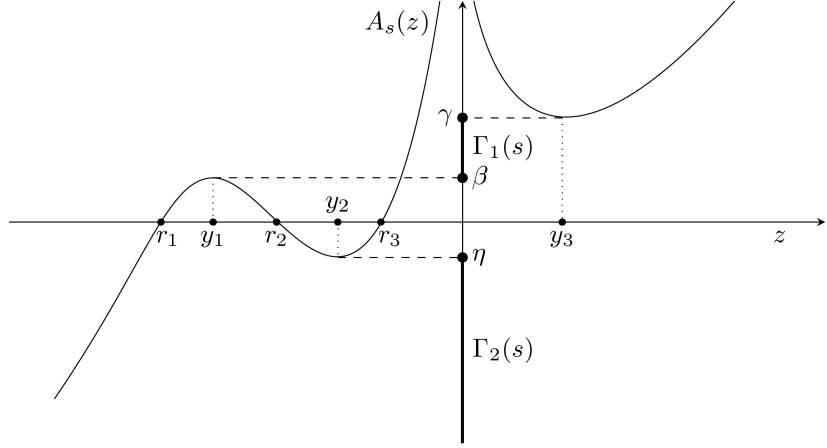}
\end{overpic}
\caption{The graph of $A_s(z)$ \eqref{family_of_symbols2},
$z\in \mathbb{R}$, for $p>0$.}\label{Plot_symbol_with_alpha}
\end{figure}

\section{Proof of Propositions~\ref{Proposition_Gamma_1_2} and \ref{Proposition_Gamma_1_2_alpha}}
\label{sec:proof_propositions}

Propositions~\ref{Proposition_Gamma_1_2} and
\ref{Proposition_Gamma_1_2_alpha} will be proved
simultaneously. First of all we observe that the symbol allows for
a factorization
\begin{equation}
\label{eq:factorization_As}
    A_s(z)=\frac{(z-r_1)(z-r_2)(z-r_3)}{z^2}, \qquad
    \text{with } r_1, r_2, r_3 < 0,
\end{equation}
see \eqref{family_of_symbols} for the case $p =0$, and
\eqref{family_of_symbols2}. We order the roots so that
\[ r_1 \leq r_2 \leq r_3 < 0. \]

If $p >0$, then it is elementary to check from
\eqref{family_of_symbols2} that all zeros are distinct.
Figure~\ref{Plot_symbol_with_alpha} shows the plot of $A_s(z)$ for the
case $p>0$. If $p=0$, then it follows from
\eqref{family_of_symbols} that
\begin{align*}
 r_1=r_2<r_3<0, & \qquad \text{ if } st<a(1-t),\\
 r_1=r_2=r_3<0, & \qquad \text{ if } st=a(1-t),\\
 r_1<r_2=r_3<0, & \qquad \text{ if } st>a(1-t).
\end{align*}
These three cases are illustrated in Figures~\ref{plotsA1},
\ref{plotsA2} and \ref{plotsA3}, respectively.

The derivative of $A_s(z)$,
$$A'_s(z) = 1 - c(s) z^{-2} - 2 d(s) z^{-3}, $$
has three roots in the complex plane. From Figures~\ref{Plot_symbol_with_alpha},
\ref{plotsA1}, \ref{plotsA2}, and
\ref{plotsA3} we see that all zeros are of $A_s'$ are real. We
denote the zeros of $A'(z)$ by $y_1$, $y_2$ and $y_3$ so that
$$ y_1 \leq y_2 < 0< y_3, $$
as indicated in the figures. The equality $y_1 = y_2$ only holds
if $p = 0$ and $s = s^*$ as in Figure~\ref{plotsA2}. To
indicate the dependence on $s$ we also write $y_1(s)$, $y_2(s)$
and $y_3(s)$.

Before proving Propositions  \ref{Proposition_Gamma_1_2} and
\ref{Proposition_Gamma_1_2_alpha} we first need two lemmas. The
proofs of these lemmas only use the fact that the zeros $r_j$ of
the symbol $A_s$ are strictly negative.

\begin{lem} \label{lem:x_in_R}
Assume that $z_1, z_2\in \mathbb{C}$ are such that $z_1\neq z_2$,
$|z_1|=|z_2|$ and $A_s(z_1)=A_s(z_2)=x$. Then $z_1=\bar z_2$ and
$x\in \mathbb{R}$.
\end{lem}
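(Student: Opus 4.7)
The idea is to introduce $P = z_1 z_2$ and $S = z_1 + z_2$, derive one algebraic relation from $A_s(z_1) = A_s(z_2)$, combine it with the consequences of $|z_1| = |z_2|$, and show that the only possibility is $P = |z_1|^2 \in \mathbb{R}_{>0}$, which forces $z_2 = \bar z_1$ and then $x \in \mathbb{R}$.

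Expand the factorization as $A_s(z) = z + b_0 + b_1 z^{-1} + b_2 z^{-2}$, where $b_1 = r_1 r_2 + r_1 r_3 + r_2 r_3 > 0$ and $b_2 = -r_1 r_2 r_3 > 0$ (positivity coming from $r_j < 0$). The difference telescopes as
\[
A_s(z_1) - A_s(z_2) = (z_1 - z_2) \Bigl[\, 1 - \frac{b_1}{z_1 z_2} - \frac{b_2 (z_1 + z_2)}{(z_1 z_2)^2} \,\Bigr],
\]
so $z_1 \neq z_2$ gives $S = P(P - b_1) / b_2$. Parametrising $z_j = \rho e^{i\alpha_j}$ with $\rho = |z_1| = |z_2|$ and setting $\phi = (\alpha_1 + \alpha_2)/2$, $\psi = (\alpha_1 - \alpha_2)/2$, one has $P = \rho^2 e^{2 i \phi}$, $S = 2\rho \cos\psi \, e^{i\phi}$, and hence $S^2 = 4 P \cos^2\psi$. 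Combining these two relations yields the key identity
\[
P (P - b_1)^2 = 4 b_2^2 \cos^2\psi,
\]
whose right-hand side lies in $[0, 4 b_2^2)$, strictly, since $z_1 \neq z_2$ forces $\cos^2\psi < 1$.

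The heart of the proof is to show that $|P| = \rho^2$ together with $P(P - b_1)^2 \in \mathbb{R}_{\geq 0}$ forces $P = \rho^2$. Conjugating the reality condition and using $\bar P = \rho^4/P$ gives $P(P - b_1)^2 = \bar P (\bar P - b_1)^2$, which factors as
\[
(P - \bar P) \bigl[ (P - b_1)^2 + \bar P (P + \bar P - 2 b_1) \bigr] = 0.
\]
If $P$ is real, then $|P| = \rho^2$ yields $P = \pm \rho^2$; the value $P = -\rho^2$ is ruled out because it makes $P(P - b_1)^2 = -\rho^2(\rho^2 + b_1)^2 < 0$, leaving $P = \rho^2$. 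Otherwise the bracket vanishes, which simplifies to $(P + \bar P - b_1)^2 = \rho^4$, i.e.\ $\Re P = (b_1 \pm \rho^2)/2$. In either sub-case the identity $P^2 = 2(\Re P)\, P - \rho^4$ leads, by direct algebraic manipulation, to $P (P - b_1)^2 = \rho^4 (b_1 \mp \rho^2)$. For the upper sign in $\Re P$, the bound $|\Re P| \leq \rho^2$ forces $\rho^2 \geq b_1$, hence $P(P - b_1)^2 \leq 0$, so non-negativity forces $\rho^2 = b_1$ and collapses to $P = \rho^2$. For the lower sign, $|\Re P| \leq \rho^2$ gives $\rho^2 \geq b_1/3$, and therefore
\[
P (P - b_1)^2 = \rho^4 (b_1 + \rho^2) \geq \frac{4 b_1^3}{27}.
\]
Here the hypothesis $r_j < 0$ is used decisively: setting $s_j := -r_j > 0$, AM--GM applied to the three pairwise products gives $b_1 = s_1 s_2 + s_1 s_3 + s_2 s_3 \geq 3 (s_1 s_2 s_3)^{2/3} = 3 b_2^{2/3}$, whence $b_1^3 \geq 27 b_2^2$ and $P(P - b_1)^2 \geq 4 b_2^2$, contradicting $\cos^2\psi < 1$. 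Hence $P = \rho^2$, so $z_2 = \rho^2/z_1 = \bar z_1$, and since $A_s$ has real coefficients, $x = A_s(z_1) = \overline{A_s(\bar z_1)} = \overline{A_s(z_2)} = \bar x$, so $x \in \mathbb{R}$.

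The main obstacle is precisely the last part of the case analysis: ruling out the two non-real branches of $P$ with $|P| = \rho^2$. The inequality $b_1^3 \geq 27 b_2^2$ (AM--GM for the pairwise products), which holds exactly because $r_1, r_2, r_3$ are strictly negative, is what closes the argument.
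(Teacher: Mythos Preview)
Your algebraic proof is correct, but it takes a substantially more laborious route than the paper's. The paper argues geometrically: writing $A_s(z)=(z-r_1)(z-r_2)(z-r_3)/z^2$ with all $r_j<0$, one observes that on the circle $|z|=\rho$ the map $\theta\mapsto |A_s(\rho e^{i\theta})|=\rho^{-2}\prod_j|\rho e^{i\theta}-r_j|$ is even in $\theta$ and strictly decreasing on $[0,\pi]$ (each factor $|\rho e^{i\theta}-r_j|^2=\rho^2+2\rho|r_j|\cos\theta+r_j^2$ is). Hence two distinct points on the circle with the same $|A_s|$-value must be conjugates, and then $x=A_s(z_1)=\overline{A_s(\bar z_1)}=\bar x$.

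What you do instead is encode everything in the symmetric functions $P=z_1z_2$, $S=z_1+z_2$, reduce to the scalar constraint $P(P-b_1)^2=4b_2^2\cos^2\psi\in[0,4b_2^2)$ on the circle $|P|=\rho^2$, and then run a case analysis to force $P=\rho^2$. The crucial step---ruling out the non-real branch with $\Re P=(b_1-\rho^2)/2$---uses the AM--GM inequality $b_1^3\ge 27b_2^2$, which is exactly the Newton-type inequality between the second and third elementary symmetric functions of the positive numbers $-r_j$. This is a nice observation, and your argument is self-contained; but it is specific to the cubic-numerator structure, requires keeping track of strict versus non-strict inequalities (you implicitly use that in Case~2, $P$ is non-real, so $|\Re P|<\rho^2$ strictly, which makes $\rho^2>b_1/3$ strict and closes the gap even when $b_1^3=27b_2^2$, i.e.\ at a triple root), and is several times longer than the paper's three-line monotonicity argument. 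The geometric proof, by contrast, generalizes immediately to symbols $\prod_j(z-r_j)/z^{m-2}$ with all $r_j<0$.
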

\begin{proof}
The complex numbers $z_1$ and $z_2$ lie in a circle of radius
$\rho = |z_1|=|z_2|$ centered at the origin in the complex plane.
Then from \eqref{eq:factorization_As} we have
$$ |A_s(z)|=\frac{\dist(z,r_1)\dist(z,r_2)\dist(z,r_3)}{\rho^2}, \quad \text{ if }|z|=\rho. $$
Since all $r_j<0$, it follows that
 \[ [-\pi, \pi] \to \mathbb R : \ \theta \mapsto |A_s(\rho e^{i\theta})| \]
is an even function which is  strictly decreasing as $\theta$
increases from $0$ to $\pi$.

Thus equality
 \[ |A_s(\rho e^{i \theta_1})| = | A_s(\rho e^{i \theta_2})|, \]
 with $\theta_{1,2} \in [-\pi, \pi]$ and $\theta_1 \neq \theta_2$
 can only occur if $\theta_2 = - \theta_1$. Then it follows
 from the assumptions of the lemma, that $z_1 = \bar{z}_2$,
 and
 \[ x=A_s(z_1)=\overline{A_s(\bar z_1)}=\overline{A_s(z_2)}=\overline{x} \]
 so that $x\in \mathbb{R}$.
\end{proof}
\begin{figure}
\centering
\begin{overpic}[width=6.7cm,height=6.7cm]{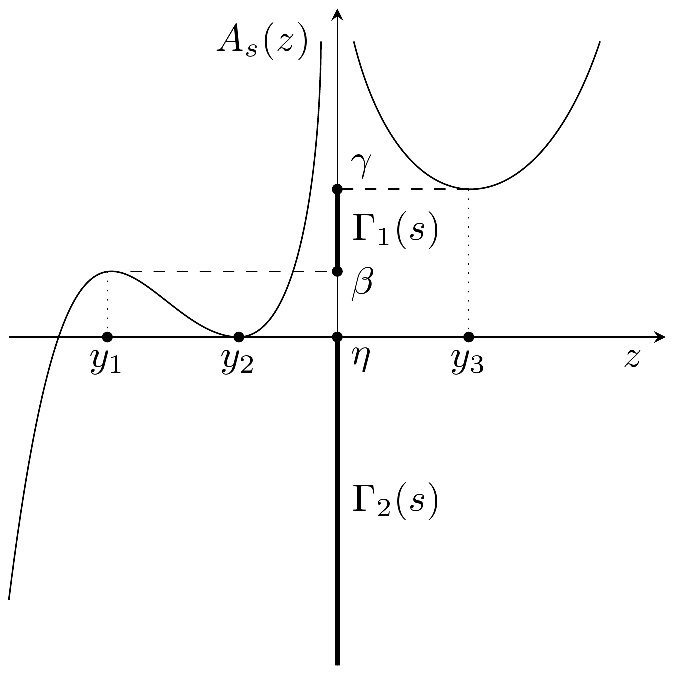}
\end{overpic}
\caption{Graph of $A_s(z)$ \eqref{family_of_symbols},
$z\in \mathbb{R}$, in the case $p=0$, $s<s^*$.}\label{plotsA1}
\end{figure}
\begin{lem}
\label{lem:Gammas_real}
For each $s>0$ we have $\Gamma_1(s)\cup\Gamma_2(s)\subset \mathbb{R}$. Moreover,
if $p>0$ or if $p=0$ and $s\neq s^*=a(1-t)/t$, then
$\Gamma_1(s)\cap\Gamma_2(s)=\emptyset$.  On the other hand, if $p=0$ and $s=s^*$,
then $\Gamma_1(s)\cap\Gamma_2(s)=\{0\}$.
\end{lem}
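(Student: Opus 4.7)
The plan has three parts, corresponding to the three claims in the lemma.

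First, to show $\Gamma_1(s) \cup \Gamma_2(s) \subset \mathbb{R}$, I take $x \in \Gamma_1(s)$, so $|z_1(x,s)| = |z_2(x,s)|$. If $z_1 \neq z_2$, Lemma \ref{lem:x_in_R} immediately gives $x \in \mathbb{R}$. If $z_1 = z_2$, then $z_1$ is a double root of $A_s(z) - x$, hence $A_s'(z_1) = 0$, so $z_1 \in \{y_1(s), y_2(s), y_3(s)\}$, which are all real as noted after equation \eqref{eq:factorization_As}; therefore $x = A_s(z_1) \in \mathbb{R}$. The argument for $\Gamma_2(s)$ is identical.

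Second, suppose $x \in \Gamma_1(s) \cap \Gamma_2(s)$, so $|z_1| = |z_2| = |z_3|$. The key observation is the monotonicity established in the proof of Lemma \ref{lem:x_in_R}: the map $\theta \mapsto |A_s(\rho e^{i\theta})|$ is strictly decreasing on $[0,\pi]$ and even. Consequently, on any circle $|z| = \rho$, the equation $A_s(z) = x$ can have at most two distinct solutions. Hence at least two of $z_1, z_2, z_3$ must coincide. Suppose, for a contradiction, that exactly two coincide, say $z_i = z_j$ but $z_k$ is distinct with $|z_k| = |z_i|$. Since $z_i = z_j$ is a multiple root, $A_s'(z_i) = 0$, forcing $z_i$ to be real. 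Applying Lemma \ref{lem:x_in_R} to the pair $(z_i, z_k)$ then gives $z_k = \bar z_i = z_i$, contradicting distinctness. Therefore $z_1 = z_2 = z_3$, i.e., $x$ is the image under $A_s$ of a point $z_0$ which is simultaneously a zero of $A_s'$ and $A_s''$, equivalently a coincidence of two of the critical values $y_1(s), y_2(s), y_3(s)$.

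Third, I identify when such a degeneracy occurs. As stated in the excerpt after \eqref{eq:factorization_As}, the critical points satisfy $y_1(s) \leq y_2(s) < 0 < y_3(s)$, with equality $y_1(s) = y_2(s)$ only in the case $p = 0$ and $s = s^* = a(1-t)/t$. Thus for $p > 0$, or for $p = 0$ with $s \neq s^*$, no two critical points coincide, so no triple root is possible, and $\Gamma_1(s) \cap \Gamma_2(s) = \emptyset$. For $p = 0$ and $s = s^*$, the factorization \eqref{family_of_symbols} collapses to $A_{s^*}(z) = (z - r)^3/z^2$ with $r = -a(1-t)^2$, and a direct computation of $A_{s^*}'(z) = (z-r)^2(z+2r)/z^3$ confirms $y_1 = y_2 = r$ with $A_{s^*}(r) = 0$; hence the unique triple-root value is $x = 0$, giving $\Gamma_1(s^*) \cap \Gamma_2(s^*) = \{0\}$. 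The only delicate point is the exclusion-of-two-coincidences argument in the second step, which is where the negativity of the zeros $r_j$ (and hence reality of all critical points) is used crucially.
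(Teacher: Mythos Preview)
Your argument is correct and the first part matches the paper's exactly. For the disjointness of $\Gamma_1(s)$ and $\Gamma_2(s)$, however, you take a cleaner route than the paper. The paper appeals to the figures to assert that, for real $x$ in the relevant range, the three solutions of $A_s(z)=x$ split as one real root and a complex-conjugate pair, and then applies Lemma~\ref{lem:x_in_R} once to get a contradiction. You instead argue abstractly: the strict monotonicity of $\theta\mapsto|A_s(\rho e^{i\theta})|$ forces at most two distinct roots on any circle, and the case of a genuine double root together with a third distinct root of equal modulus is ruled out by reality of the critical points plus Lemma~\ref{lem:x_in_R}, so all three roots must coincide. This reduces the question to the existence of a triple root, i.e., a multiple zero of $A_s'$, which is then settled by the known configuration $y_1\le y_2<0<y_3$. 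Your approach is more self-contained (it does not lean on the graphs) and makes explicit what the paper leaves to pictorial inspection. Two minor points: in your second step you mean a coincidence of two critical \emph{points} $y_j$, not critical \emph{values}; and in the final clause you have only shown $\Gamma_1(s^*)\cap\Gamma_2(s^*)\subset\{0\}$, so you should add the one-line observation that at $x=0$ the equation $A_{s^*}(z)=0$ has the triple root $z=r$, whence $|z_1|=|z_2|=|z_3|$ and $0$ indeed lies in the intersection.
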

\begin{proof}
If $x\in \Gamma_1(s)\cup\Gamma_2(s)$ and $A_s(z)-x$ has a double root, then there exists
$z_1\in \mathbb{C}$ such that $A_s(z_1)=x$ and $A'_s(z_1)=0$. Since all zeros of $A'(z)$ are
real, it follows that $z_1\in \mathbb{R}$.

If $x\in \Gamma_1(s)\cup\Gamma_2(s)$ and $A_s(z)-x$ does not have a double root, then
there exist $z_1,z_2\in \mathbb{C}$ such that $z_1\neq z_2$, $|z_1|=|z_2|$ and
$x=A_s(z_1)=A_s(z_2)$. Lemma~\ref{lem:x_in_R} says that $x\in \mathbb{R}$.
This proves that $\Gamma_1(s)\cup\Gamma_2(s)\subset \mathbb{R}$.

If $x\in \Gamma_1(s)\cap\Gamma_2(s)$, and $s\neq s^*$ if $p=0$, then there exist three solutions
of $A_s(z)=x$. One negative solution $z_1<0$ and two complex conjugated solutions
$z_2$ and $\bar z_2$ (see Figures~\ref{Plot_symbol_with_alpha}, \ref{plotsA1}, \ref{plotsA3}).
Moreover $z_1\neq z_2$ and $z_1\neq \bar z_2$. Now $|z_1|=|z_2|$ ($x\in\Gamma_1(s)$) and
 $A_s(z_1)=A_s(z_2)$. Therefore we can apply Lemma
\ref{lem:x_in_R} and we obtain $z_1=\bar z_2$ which is a contradiction. Then
$\Gamma_1(s)\cap\Gamma_2(s)=\emptyset$.

The fact that $\Gamma_1(s^*)\cap\Gamma_2(s^*)=\{0\}$, if $p=0$, follows directly
from Figure~\ref{plotsA2}, by observing that $z=-a(1-t)^2$ is a triple root of $A_s(z)$.
\end{proof}

\paragraph{Proof of Propositions~\ref{Proposition_Gamma_1_2} and \ref{Proposition_Gamma_1_2_alpha}.}
If $p>0$ then there are three local extrema of $A_s(z)$, namely
$\beta(s), \gamma(s), \eta(s)$, such that
$$\eta(s)<0<\beta(s)<\gamma(s).$$
If $x\in(\eta(s),\beta(s))\cup(\gamma(s),\infty)$, then there exist three
different real solutions of $A_s(z)=x$. It is easily seen
that these solutions also differ in absolute value (Figure~\ref{Plot_symbol_with_alpha},
and Lemma~\ref{lem:x_in_R} if $x\in (\gamma(s),\infty))$.
On the other hand, there is one real and two complex conjugated solutions whenever
$x\in(-\infty,\eta(s)]\cup[\beta(s),\gamma(s)]$. Therefore
$$\Gamma_1(s)\cup\Gamma_2(s)\subset (-\infty,\eta(s)]\cup[\beta(s),\gamma(s)].$$
We have
that $A_s(z)=\gamma(s)$ has a double root $y_3=y_3(s)>0$ and one negative root whose absolute value
is less than $y_3(s)$. Thus $\gamma(s)\in \Gamma_1(s)$. Also $\beta(s)\in \Gamma_1(s)$.
Now the fact that $\Gamma_1(s)$ is connected (see \cite{Ullman},\cite[Theorem 11.19]{BG})
says that $\Gamma_1(s)=[\beta(s),\gamma(s)]$.
On the other hand $A_s(z)=\eta(s)$ has a double root at $y_2(s)<0$
and a negative root whose absolute value is larger than $|y_2(s)|$.
Therefore $\eta_2(s)\in \Gamma_2(s)$
and $\Gamma_2(s)=(-\infty,\eta(s)]$.

\begin{figure}
\centering
\begin{overpic}[width=6.7cm,height=6.7cm]{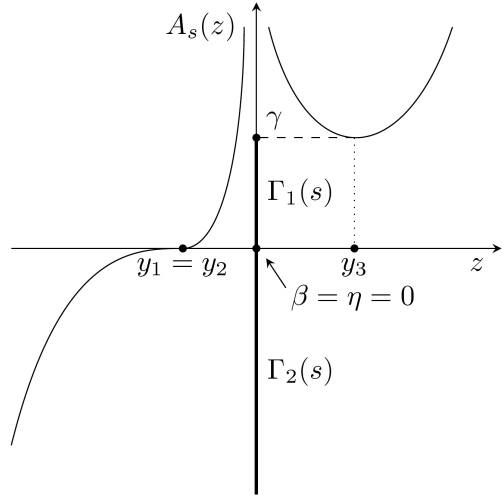}
\end{overpic}
\caption{Graph of $A_s(z)$ \eqref{family_of_symbols}, $z\in \mathbb{R}$, in the case $p=0$, $s=s^*$.}\label{plotsA2}
\end{figure}

The case $p=0$ follows analogously by studying Figures~\ref{plotsA1},
\ref{plotsA2}, and \ref{plotsA3} whenever
$st<a(1-t)$, $st=a(1-t)$ or $st>a(1-t)$, respectively. We deduce
from Figure~\ref{plotsA1} that $\eta(s)=0$ for $st<a(1-t)$. On the
other hand, Figure~\ref{plotsA3} shows that $\beta(s)=0$ if
$st>a(1-t)$.

We shall prove that $\gamma(s)$ is an increasing function. The monotonicity of $\beta(s)$
and $\eta(s)$ is proved with similar considerations. Observe that the function
$$B(z,s)=A_s(sz)=\frac{(sz+a(1-t)^2)(z+t(1-t))^2}{z^2}+\frac{p t(1-t)(z+t(1-t))}{z},$$
as a function of $z$, has a local minimum at $y_3(s)/s$ and
$$\gamma(s)=A_s(y_3(s))=B(y_3(s)/s,s).$$
If we take the partial derivative of $B(z,s)$ with respect to $s$ we obtain
$$\frac{\partial B(z,s)}{\partial s}=\frac{(z+t(1-t))^2}{z}.$$
which is positive for all $z>0$. Then the fact that $z\mapsto B(z,s)$ has a minimum
at $y_3(s)/s$ implies that
$$\gamma'(s)=\frac{\partial B(y_3(s)/s,s)}{\partial z}\frac{(y'_3(s)s-y_3(s))}{s^2}
+\frac{\partial B(y_3(s)/s,s)}{\partial s}=\frac{\partial B(y_3(s)/s,s)}{\partial s}>0,$$
and therefore $\gamma(s)$ is increasing for all $s>0$.

If $p>0$, it is a straightforward computation that $y_1(s)$, $y_2(s)$ and $y_3(s)$ have the
following behavior as $s\rightarrow \infty$
\begin{align*}
y_1(s)&=st(1-t)+\mathcal{O}(1),\\
y_2(s)&=-2a(1-t)^2+\mathcal{O}(s^{-1}),\\
y_3(s)&=-st(1-t)-\frac12 t(1-t)p+\mathcal{O}(s^{-1}).\\
\end{align*}
Then we obtain
\begin{align*}
\gamma(s)=A_s(y_3(s))&=4st(1-t)+\mathcal{O}(1),\\
\beta(s)=A_s(y_2(s))&=\frac14 tp^2(1-t)s^{-1}+\mathcal{O}(s^{-2}),\\
\eta(s)=A_s(y_1(s))&=-\frac{s^2t^2}{4a}+\mathcal{O}(s),
\end{align*}
as $s\to \infty$. This proves the limits
$$\lim_{s\rightarrow \infty} \beta(s)\rightarrow 0, \quad \lim_{s\rightarrow \infty}
\gamma(s)=\infty,\quad \lim_{s\rightarrow \infty}  \eta(s)=-\infty,$$
and completes the proof of the propositions. \qed

\begin{figure}
\centering
\begin{overpic}[width=6.7cm,height=6.7cm]{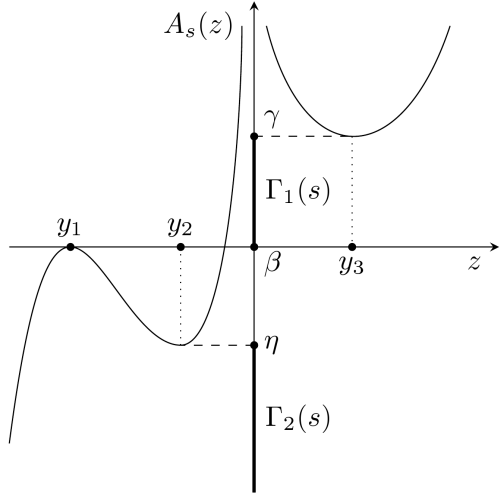}
\end{overpic}
\caption{ Plots of $A_s(z)$ \eqref{family_of_symbols}, $z\in \mathbb{R}$, in the case $p=0$, $s>a(1-t)/t$.}\label{plotsA3}
\end{figure}

\section{Proof of Theorem~\ref{proposition_V(x)_nu_1}}
\label{sec:proof_theorem_Var_conditions}

From the definitions of $\nu_1^{\xi}$ and $\nu_2^{\xi}$ in \eqref{nu1xi-as-integral}
and \eqref{nu2xi-as-integral}, it is immediate that $\supp(\nu_1^{\xi}) \subset [0,\infty)$,
$\int d\nu_1^{\xi} = 1$, $\supp(\nu_2^{\xi}) \subset (-\infty,0]$ and $\int d\nu_2^{\xi} = 1/2$.
In fact, from \eqref{nu1xi-as-integral} and the fact that the
sets $\Gamma_1(s) = \supp(\mu_1^s)$ are increasing as $s$ increases
(see Propositions~\ref{Proposition_Gamma_1_2} and \ref{Proposition_Gamma_1_2_alpha}) it
follows that
\begin{equation} \label{eq:supportnu1}
    \supp(\nu_1^\xi) = \bigcup_{s \leq \xi} \Gamma_1(s) = \Gamma_1(\xi).
    \end{equation}
From \eqref{nu2xi-as-integral} and the definition of $\sigma$ in \eqref{definition-of-sigma}, it is
then also clear that
\[ \nu^\xi_2 =\frac1\xi  \int_0^\xi \mu_2^sds \leq \frac1\xi\int_0^{\infty} \mu_2^sds
= \frac1\xi \sigma, \]
and
\begin{equation} \label{eq:sigmaminnu2}
    \sigma - \xi \nu_2^{\xi} = \int_{\xi}^{\infty} \mu_2^s ds
\end{equation}
so that
\begin{equation} \label{eq:supportnu2}
    \supp(\sigma - \xi \nu_2^{\xi}) = \bigcup_{s \geq \xi} \Gamma_2(s)
        = \Gamma_2(\xi)
        \end{equation}
where the last equality holds since the sets $\Gamma_2(s)$ are decreasing
as $s$ increases, see also Propositions~\ref{Proposition_Gamma_1_2}
and \ref{Proposition_Gamma_1_2_alpha}.

Thus in order to establish that $(\nu_1^{\xi}, \nu_2^{\xi})$ is the minimizer
of the energy functional \eqref{functionalwithV} under the conditions
stated in Theorem~\ref{proposition_V(x)_nu_1} it suffices to
prove that the variational conditions
\eqref{variational_nu_1} and \eqref{variational_nu_2} are satisfied.

The proof of \eqref{variational_nu_1} and \eqref{variational_nu_2} will be carried
out by integrating the variational conditions \eqref{variational_mu_1} and \eqref{variational_mu_2}
with respect to $s$ from $0$ to $\xi$. The proof of Theorem 2.3 of \cite{DK1}
contains a more general expression for the variational conditions \eqref{variational_mu_1} and
\eqref{variational_mu_2}, which is valid for any complex number $x$, namely
\begin{align}
\label{variational_mu_1_log}
2\int \log |x-y| d\mu^s_1(x) -  \int \log |x-y| d\mu^s_2(x) - \ell^s & = \log \left|
\frac{z_1(x,s)}{z_2(x,s)} \right|,\\
\label{variational_mu_2_log}
2\int \log |x-y| d\mu^s_2(x) -  \int \log |x-y| d\mu^s_1(x) &= \log \left|
\frac{z_2(x,s)}{z_3(x,s)} \right|.
\end{align}
These conditions reduce to
\eqref{variational_mu_1} and \eqref{variational_mu_2} whenever
$x\in \Gamma_1(s)$ and $x\in \Gamma_2(s)$, respectively.

\paragraph{Proof of \eqref{variational_nu_1}.}
If we multiply both sides of \eqref{variational_mu_1_log} by
$1/\xi$, integrate with respect to $s$ from $0$ to $\xi$, and
interchange the order of integration, then we obtain
\begin{equation} \label{eq:integrated1}
    2\int \log |x-y| d\nu^\xi_1(x) -  \int \log |x-y| d\nu^\xi_2(x) -
    \ell =\frac1\xi \int_0^\xi \log \left| \frac{z_1(x,s)}{z_2(x,s)}
    \right| ds,\quad x\in\mathbb{C},
    \end{equation}
for some constant $\ell\in \mathbb{R}$, where $\nu_1^\xi$ and
$\nu_2^\xi$ are the measures defined in \eqref{nu1xi-as-integral}
and \eqref{nu2xi-as-integral}, respectively.

Let $x > 0$. Since $|z_1(x,s)| \geq |z_2(x,s)|$ for every $s$ it follows
from \eqref{eq:integrated1} that
\begin{equation} \label{eq:integrated2}
    2\int \log |x-y| d\nu^\xi_1(x) -  \int \log |x-y| d\nu^\xi_2(x) -
    \ell \leq \frac{1}{\xi} \int_0^{\infty} \log \left| \frac{z_1(x,s)}{z_2(x,s)}
    \right| ds = \frac{1}{\xi} V(x).
    \end{equation}
If $x \in \supp(\nu_1^{\xi})$ then $x \in \Gamma_1(\xi)$ by \eqref{eq:supportnu1}
and therefore $x \in \Gamma_1(s)$ for every $s \geq \xi$,
since the sets are increasing. Thus $|z_1(x,s)| = |z_2(x,s)|$ for every $s \geq \xi$,
and equality holds in \eqref{eq:integrated2} for $x \in \supp(\nu_1^{\xi})$.
This completes the proof of \eqref{variational_nu_1}.

\paragraph{Proof of of \eqref{variational_nu_2}.}
If we multiply both sides of equation \eqref{variational_mu_2_log} by $1/\xi$, integrate
with respect to $s$ from $0$ to $\xi$, and we interchange the order of integration we obtain
\[  2\int \log |x-y| d\nu^\xi_2(x) -  \int \log |x-y| d\nu^\xi_1(x)
   = \frac1\xi \int_0^\xi \log \left| \frac{z_2(x,s)}{z_3(x,s)} \right| ds. \]
Since $|z_2(x,s)| \geq |z_3(x,s)|$ for every $s > 0$ it follows that
\begin{equation} \label{eq:equalitynu2}
    2\int \log |x-y| d\nu^\xi_2(x) -  \int \log |x-y| d\nu^\xi_1(x)
   \geq 0, \qquad x \in \mathbb C.
   \end{equation}
Equality holds in \eqref{eq:equalitynu2} if and only if
$|z_2(x,s)| = |z_3(x,s)|$ for every $s \in (0,\xi]$, that is, if and only if
\[ x \in \bigcap_{s \leq \xi} \Gamma_2(s) = \Gamma_2(\xi) = \supp(\sigma - \xi \nu_2^{\xi}), \]
where the first equality holds since the sets $\Gamma_2(s)$
are decreasing as $s$ increases, and the last equality holds because
of \eqref{eq:supportnu2}. This completes the proof of \eqref{variational_nu_2}.
\qed

\section{Proof of Theorem~\ref{thm:external_field}}
\label{sec:externalfield}

Before starting with the proof of  Theorem~\ref{thm:external_field} we establish the following lemma.
\begin{lem}
\label{asymptotic_solutions_alpha}
Let $A_s(z)$ be given by \eqref{family_of_symbols2}, and
let $z_1(x,s)$, $z_2(x,s)$ and $z_3(x,s)$ be the solutions of $A_s(z)=x$, ordered
as in \eqref{absolute_value_solutions}. Then
\begin{equation} \label{eq:zforsto0}
    \begin{aligned}
    \lim_{s\to 0+} z_1(x,s) & =x-p t (1-t) - a(1-t)^2,\\
    \lim_{s\to 0+} s^{-1}z_2(x,s) & =-\frac{2at(1-t)^2}{2a(1-t)+p t - \sqrt{p^2t^2+4ax}},\\
    \lim_{s\to 0+} s^{-1}z_3(x,s) & =-\frac{2at(1-t)^2}{2a(1-t)+p t - \sqrt{p^2t^2+4ax}}.
\end{aligned}
\end{equation}
\end{lem}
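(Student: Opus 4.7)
The plan is to reduce $A_s(z)=x$ to a monic cubic in $z$ and analyse its roots as $s\to 0^+$ via a standard dominant-balance argument. Clearing denominators in \eqref{family_of_symbols2}, the equation $A_s(z)=x$ takes the form
\begin{equation*}
z^3 + (\alpha+\beta-x)\,z^2 + (\gamma+\alpha\beta)\,z + \alpha\gamma \;=\; 0,
\end{equation*}
where $\alpha = st(1-t)$, $\beta = (1-t)\bigl(a(1-t)+(s+p)t\bigr)$ and $\gamma = ast(1-t)^3$. Since $\alpha,\gamma = O(s)$ and $\beta \to \beta_0 := a(1-t)^2 + pt(1-t)$ as $s\to 0^+$, the cubic degenerates at $s=0$ to $z^2(z+\beta_0 - x)=0$, which has a simple root at $z = x-\beta_0$ and a double root at $z=0$.

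For the simple root the implicit function theorem produces a unique analytic branch $s\mapsto z_1(x,s)$ of solutions of $A_s(z)=x$ with $z_1(x,0) = x - a(1-t)^2 - pt(1-t)$, which gives the first claim. Since this root has modulus $O(1)$ as $s\to 0^+$ while the other two roots collapse to $0$, it is strictly the largest in modulus for all sufficiently small $s$, and hence it really is the branch labelled $z_1$ in \eqref{absolute_value_solutions}.

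To handle the double root I substitute $z = sw$ and divide by $s^2$. A short expansion gives
\begin{equation*}
\gamma + \alpha\beta \;=\; s\,t(1-t)^2\bigl(2a(1-t)+pt\bigr) + O(s^2),
\qquad \alpha\gamma \;=\; as^2 t^2(1-t)^4,
\end{equation*}
so both the linear and constant coefficients of the cubic are of order $s^2$, confirming that $z = O(s)$ is the correct dominant balance. Passing to the limit $s\to 0^+$ in the rescaled cubic yields the quadratic
\begin{equation*}
(\beta_0 - x)\,w^2 + t(1-t)^2\bigl(2a(1-t)+pt\bigr)\,w + at^2(1-t)^4 \;=\; 0,
\end{equation*}
whose discriminant simplifies algebraically to $t^2(1-t)^4(p^2t^2+4ax)$. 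Applying the quadratic formula and then rationalising the numerator gives the two roots in the compact form
\begin{equation*}
w_\pm \;=\; \frac{-2at(1-t)^2}{2a(1-t)+pt \pm \sqrt{p^2t^2+4ax}},
\end{equation*}
which are the asserted limits of $s^{-1}z_2(x,s)$ and $s^{-1}z_3(x,s)$, assigned according to the ordering in \eqref{absolute_value_solutions}.

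The main technical point is identifying the correct scaling $z=sw$ for the two roots collapsing to the origin. Once one checks that $\gamma+\alpha\beta$ and $\alpha\gamma$ are both exactly of order $s^2$ (so that neither small root is of order $s^{1/2}$ or smaller), everything reduces to the implicit function theorem for the simple root and solving a single quadratic for the two small roots, together with a routine check that $|w_-|\geq|w_+|$ on the real axis so that the labelling \eqref{absolute_value_solutions} picks out the signs as claimed.
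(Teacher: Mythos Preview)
Your argument is correct and is precisely the ``straightforward computation'' the paper alludes to but does not carry out: clear denominators, observe the degeneration $z^2(z+\beta_0-x)=0$ at $s=0$, peel off the simple root by the implicit function theorem, and rescale $z=sw$ to capture the two collapsing roots via the limiting quadratic. Your discriminant and Vieta checks are right, and the identification $|w_-|\ge |w_+|$ (hence $z_2\sim sw_-$, $z_3\sim sw_+$) is the correct way to match the labelling \eqref{absolute_value_solutions}.

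Two small remarks. First, your sentence ``both the linear and constant coefficients of the cubic are of order $s^2$'' is a slip: the linear coefficient $\gamma+\alpha\beta$ is of order $s$, not $s^2$. What you actually need (and what your own displayed expansions show) is that after substituting $z=sw$ the linear and constant \emph{terms} are both of order $s^2$, so that dividing by $s^2$ leaves a nondegenerate quadratic in the limit. Second, note that the lemma as printed gives the \emph{same} limit for $s^{-1}z_2$ and $s^{-1}z_3$; your computation with $w_\pm$ shows this is a typo, and indeed the later application in the proof of \eqref{eq:density_constraint} uses the two distinct signs.
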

\begin{proof}
The lemma follows by a straightforward computation.
\end{proof}

For the proof of Theorem~\ref{thm:external_field} we need
to establish the two identities \eqref{external_field}
and \eqref{eq:density_constraint}.

\paragraph{Proof of \eqref{external_field}.}
Let $x > 0$. From Propositions~\ref{Proposition_Gamma_1_2} and \ref{Proposition_Gamma_1_2_alpha}
it follows that there exist a unique $s^*(x) \geq 0$ so that for all $s > 0$
\[ x \in \Gamma_1(s) \quad \Longleftrightarrow \quad s \geq s^*(x). \]
Then $\log |z_1(x,s)/z_2(x,s)|=0$ for all $s\geq s^*(x)$, and so by \eqref{definition-of-V}
\begin{equation} \label{eq:finiteintegralV}
     V(x)=\int_0^{s^*(x)}  \log \left| \frac{z_1(x,s)}{z_2(x,s)} \right| ds.
\end{equation}
There is a special value
\[ x_0 = (1-t) (a(1-t) + p t) = \lim_{s \to 0+} \beta(s) = \lim_{s \to 0+} \gamma(s) \]
that belongs to every $\Gamma_1(s)$ for all $s > 0$.
Then $s^*(x_0) = 0$ and
\begin{equation} \label{eq:Vinx0}
    V(x_0) = 0.
    \end{equation}

The derivative of \eqref{eq:finiteintegralV} is
\begin{equation} \label{integral_derivative_A}
    V'(x) = \int_0^{s^*(x)} \left(\frac{1}{z_1(x,s)}\frac{\partial z_1(x,s)}{\partial x}
    -\frac{1}{z_2(x,s)}\frac{\partial z_2(x,s)}{\partial x}\right) ds.
\end{equation}
In order to handle this integral we introduce new variables
\begin{equation}
\label{eq:variables_z_tilde}
    \widetilde{z}_1(x,s) = \frac{z_1(x,s)}{s}, \qquad
    \widetilde{z}_2(x,s) = \frac{z_2(x,s)}{s}, \qquad
    \widetilde{z}_3(x,s)=\frac{z_3(x,s)}{s}.
\end{equation}
Since $z_j(x,s)$ for $j=1,2,3$ is a  solution
 of $A_s(z)=x$, it follows that $\widetilde{z}_j(x,s)$ for $j=1,2,3$ is a solution of the
 equation $B(z,s) = x$ where
\begin{equation} \label{Symbol_B}
    B(z,s) = \frac{(sz+a(1-t)^2)(z+t(1-t))^2}{z^2}+\frac{p t(1-t)(z+t(1-t)}{z}.
\end{equation}

Taking partial derivatives with respect to $s$ and $x$ on both sides of $B(\widetilde{z}_j(x,s),s)=x$,
and applying the chain rule, we obtain
\begin{equation} \label{eq:Bpartials}
\begin{aligned}
    \left(\frac{\partial B}{\partial z}(\widetilde{z}_j(x,s),s) \right) \,
    \frac{\partial \widetilde{z}_j(x,s)}{\partial s} +
    \frac{\partial B}{\partial s}(\widetilde{z}_j(x,s),s) & = 0, \\
    \left(\frac{\partial B}{\partial z}(\widetilde{z}_j(x,s),s)\right)
    \frac{\partial \widetilde{z}_j(x,s)}{\partial x} & = 1
\end{aligned}
\end{equation}
for $j=1,2,3$. From \eqref{Symbol_B}, it is elementary to deduce that
\[ \frac{\partial B}{\partial s}(\widetilde{z}_j(x,s),s)
    =\frac{(\widetilde{z}_j(x,s)+t(1-t))^2}{\widetilde{z}_j(x,s)}. \]
Combining this with \eqref{eq:Bpartials}  we obtain
\begin{equation}
\label{eq:derivative_z_i}
    \frac{1}{\widetilde{z}_j(x,s)}\frac{\partial \widetilde{z}_j(x,s)}{\partial x}
    =-\frac{1}{(\widetilde{z}_j(x,s)+t(1-t))^2}\frac{\partial \widetilde{z}_j(x,s)}
    {\partial s}, \qquad j=1,2,3.
\end{equation}

Using \eqref{eq:derivative_z_i} in \eqref{integral_derivative_A} we get
\begin{equation*}
    V'(x)=-\int_0^{s^*(x)} \left(\frac{1}{(\widetilde{z}_1(x,s)+t(1-t))^2}
    \frac{\partial \widetilde{z}_1(x,s)}{\partial s}-\frac{1}{(\widetilde{z}_2(x,s)+t(1-t))^2}
    \frac{\partial \widetilde{z}_2(x,s)}{\partial s}\right) ds,
\end{equation*}
which can be written as
\begin{equation} \label{eq:Vprime2}
    V'(x) =
    \int_0^{s^*(x)} \frac{\partial}{\partial s}\big(F(\widetilde z_1(x,s))-F(\widetilde z_2(x,s))\big)ds
    \end{equation}
with
\begin{equation} \label{eq:def_F}
    F(x)=\frac{1}{x+t(1-t)}.
\end{equation}
From \eqref{eq:Vprime2} and the fundamental theorem of calculus, we have
\begin{equation} \label{eq:Vprime3}
    V'(x) = F(\widetilde z_1(x,s^*(x))-F(\widetilde z_2(x,s^*(x)) -\lim_{s\to 0+}
    (F(\widetilde z_1(x,s))-F(\widetilde z_2(x,s))).
    \end{equation}

By definition $s^*(x)$ is the smallest value of $s \geq 0$ for which $x\in \Gamma_1(s)$.
Then $x=\gamma(s^*(x))$ if $x_0<x$ and $x=\beta(s^*(x))$ if $0<x<x_0$.
We can observe from Figures~\ref{Plot_symbol_with_alpha}-\ref{plotsA3} that $z_1(\gamma(s),s)=z_2(\gamma(s),s)$ and
 $z_1(\beta(s),s)=z_2(\beta(s),s)$. Therefore
 \[ \widetilde z_1(x,s^*(x)) =\widetilde z_2(x,s^*(x)) \]
 and \eqref{eq:Vprime3} reduces because of \eqref{eq:def_F} to
\begin{equation} \label{eq:Vprime4}
    V'(x) = -\lim_{s\to 0+}
        \left(\frac{1}{\widetilde z_1(x,s)+t(1-t)}-\frac{1}{\widetilde z_2(x,s) +t(1-t)}
    \right).
    \end{equation}
From \eqref{eq:variables_z_tilde} and Lemma~\ref{asymptotic_solutions_alpha} we find that
\begin{align*}
    \lim_{s\rightarrow 0+} \widetilde z_1(x,s) = \infty
    \qquad \text{and} \qquad
    \lim_{s\rightarrow 0+} \widetilde z_2(x,s) =
    -\frac{2at(1-t)^2}{2a(1-t)+p t - \sqrt{p^2t^2+4ax}}.
\end{align*}
Then \eqref{eq:Vprime4} leads by straightforward computation to
\begin{equation} \label{eq:Vprime5}
V'(x)=\frac{1}{ t(1-t)}-\frac{2a}{t(\sqrt{p^2t^2+4ax}-p t)}.
\end{equation}

We  obtain $V(x)$ by integrating \eqref{eq:Vprime5} with respect to $x$. Thus
\[ V(x)=\frac{x}{t(1-t)}-\frac{\sqrt{p^2t^2+4ax}}{t}
-p \log(\sqrt{p^2t^2+4ax}-p t) + C. \]
The constant of integration $C$ should be such that $V(x_0) = 0$,
see \eqref{eq:Vinx0}.
This leads to
\[ C=\frac{a(1-t)}{t}+p\log(2a(1-t)) \]
and \eqref{external_field} is proved.

\paragraph{Proof of \eqref{eq:density_constraint}.}
 The measure $\sigma$,
introduced in \eqref{definition-of-sigma}, has the density
\begin{equation} \label{eq:sigmax1}
    \frac{d\sigma(x)}{dx} = \int_0^\infty \frac{d\mu_2^s(x)}{dx}ds, \qquad x < 0,
    \end{equation}
where $d\mu_2^s(x)/dx = 0$ for $x \not\in \Gamma_2(s)$ and by \eqref{eq:densitymu2}
\[  \frac{d\mu^s_2(x)}{dx}=
    \frac{1}{2\pi i} \left( \frac{z'_{2-}(x,s)}{z_{2-}(x,s)} -
    \frac{z'_{2+}(x,s)}{z_{2+}(x,s)} \right)
    =\frac{1}{2\pi i}
    \left( \frac{z'_{3+}(x,s)}{z_{3+}(x,s)} - \frac{z'_{2+}(x,s)}{z_{2+}(x,s)}
    \right),
\]
for $x\in \Gamma_2(s)$. The last equality comes from the
fact that $z_{2-}(x,s) = z_{3+}(x,s)$ for $x \in \Gamma_2(s)$.

From Proposition~\ref{Proposition_Gamma_1_2_alpha} it follows that
$d\mu_2^s(x)/dx=0$ for every $x \in(-p^2t^2/4a,0]$ and every $s > 0$ so
that
\[    \frac{d\sigma(x)}{dx} = 0 \qquad \text{for } x \in (-p^2 t^2/4a,0]. \]

Let $x < p^2 t^2/4a$. Again from Proposition~\ref{Proposition_Gamma_1_2_alpha}
(or from Proposition~\ref{Proposition_Gamma_1_2} in case $p = 0$)
it follows that there is a unique $s^*(x) > 0$ so that
\[ x \in \Gamma_2(s) \qquad \Longleftrightarrow \qquad s \leq s^*(x). \]
Then $d\mu_2^s(x)/dx=0$ for $s>s^*(x)$ so that  \eqref{eq:sigmax1} is in fact a finite integral
\begin{equation} \label{eq:sigmax2}
    \frac{d\sigma(x)}{dx} =
    \frac{1}{2\pi i} \int_0^{s^*(x)}
    \left( \frac{1}{z_{3+}(x,s)}\frac{\partial z_{3+}(x,s)}{\partial x}
    - \frac{1}{z_{2+}(x,s)}\frac{\partial z_{2+}(x,s)}{\partial x} \right) ds.
\end{equation}

The computation of \eqref{eq:sigmax2} is similar to
the computation of \eqref{integral_derivative_A} given above.
We use the functions $\widetilde{z}_j(x,s)$ as
in \eqref{eq:variables_z_tilde} and the function $F$
as defined in \eqref{eq:def_F}.
From \eqref{eq:derivative_z_i} and \eqref{eq:sigmax2} we then get
\begin{align} \nonumber
    2\pi i \frac{d\sigma(x)}{dx} & = - \int_0^{s^*(x)}
    \left(\frac{1}{(\widetilde{z}_{3+}+t(1-t))^2}\frac{\partial \widetilde{z}_{3+}}
    {\partial s}-\frac{1}{(\widetilde{z}_{2+}+t(1-t))^2}\frac{\partial \widetilde{z}_{2+}}{\partial s}\right)ds \\
    & = \nonumber
    F(\widetilde z_{3+}(x,s^*(x)))-F(\widetilde z_{2+}(x,s^*(x))) \\
    &\hspace{4cm}- \lim_{s\to 0+} \big(F(\widetilde z_{3+}(x,s))- F(\widetilde z_{2+}(x,s))\big) \nonumber\\
    & = \label{eq:sigmax3}
    -\lim_{s\to 0+} \big(F(\widetilde z_{3+}(x,s))-F(\widetilde z_{2+}(x,s))\big)
    \end{align}
since $\widetilde{z}_{2+}(x,s^*(x))=\widetilde{z}_{3+}(x,s^*(x))$.

Since $x < -p^2t^2/4a$, we obtain from \eqref{eq:variables_z_tilde}
and Lemma~\ref{asymptotic_solutions_alpha}
\begin{align*}
    \lim_{s\to 0+} \widetilde{z}_2(x,s)& = \lim_{s\rightarrow 0+} s^{-1}z_2(x,s)=
    -\frac{2at(1-t)^2}{2a(1-t)+p t - i\sqrt{4a|x|-p^2t^2}}, \\
    \lim_{s\to 0+} \widetilde{z}_3(x,s)& = \lim_{s\rightarrow 0+} s^{-1}z_3(x,s)=
    -\frac{2at(1-t)^2}{2a(1-t)+p t + i\sqrt{4a|x|-p^2t^2}}.
\end{align*}
Using this and \eqref{eq:def_F} in \eqref{eq:sigmax3} we easily get
\begin{align*}
    2\pi i \frac{d\sigma(x)}{dx} & = - \lim_{s\to 0+}
        \left(\frac{1}{\widetilde z_2(x,s)+t(1-t)}-\frac{1}{\widetilde z_3(x,s) +t(1-t)} \right)\\
    &=i \frac{\sqrt{4a|x|-p^2t^2 }}{t |x|}.
\end{align*}
which completes the proof of identity \eqref{eq:density_constraint}.
 \qed

\end{document}